\documentclass[11pt]{article}

\usepackage{epsfig,epsf,fancybox}
\usepackage{amsmath, float}
\usepackage{mathrsfs}
\usepackage{amssymb}
\usepackage{graphicx}
\usepackage{color}
\usepackage{multirow}
\usepackage{paralist}
\usepackage{verbatim}
\usepackage{galois}
\usepackage[ruled, noend, noline]{algorithm2e}
\usepackage{boxedminipage}
\usepackage{booktabs}
\usepackage{accents}
\usepackage{stmaryrd}
\usepackage[table]{xcolor}
\usepackage{hhline}
\usepackage[T1]{fontenc}
\usepackage[utf8]{inputenc}
\usepackage{multirow}
\usepackage{bbding}
\usepackage{subcaption}
\usepackage{mathtools}

\usepackage{url}%
\usepackage[colorlinks,linkcolor=magenta,citecolor=blue, pagebackref=true,backref=true]{hyperref}

\usepackage[nameinlink,capitalize]{cleveref}
\usepackage{bibentry}
\usepackage{natbib}

\usepackage{authblk}

\textheight 8.5truein
\topmargin 0.25in
\headheight 0in
\headsep 0in
\textwidth 6.8truein
\oddsidemargin  0in
\evensidemargin 0in

\newtheorem{theorem}{Theorem}[section]
\newtheorem{corollary}{Corollary}[section]
\newtheorem{lemma}{Lemma}[section]
\newtheorem{proposition}{Proposition}[section]

\newtheorem{remark}{Remark}[section]
\newtheorem{assumption}{Assumption}[section]
\numberwithin{equation}{section}

\newcommand{\bd}{{d}}

\newcommand{\bg}{{g}}

\newcommand{\bv}{{v}}

\newcommand{\bH}{{H}}
\newcommand{\bI}{{I}}

\newcommand{\EE}{\mathbb{E}}

\newcommand{\bx}{{x}}
\newcommand{\by}{{y}}
\newcommand{\bbR}{\mathbb{R}}

\newcommand{\xk}{x_k}
\newcommand{\Hk}{H_k}

\newcommand{\gk}{g_k}

\newcommand{\dk}{d_k}

\newcommand{\xkn}{x_{k+1}}

\definecolor{salmon}{rgb}{1.0, 0.55, 0.41}
\definecolor{amethyst}{rgb}{0.6, 0.4, 0.8}

\usepackage[nameinlink,capitalize]{cleveref}
\crefname{assumption}{Assumption}{Assumptions}
\crefname{Assumption}{Assumption}{Assumptions}
\crefname{Lemma}{Lemma}{Lemmata}
\crefname{lemma}{Lemma}{Lemmata}
\crefname{Theorem}{Theorem}{Theorems}
\crefname{theorem}{Theorem}{Theorems}
\crefname{corollary}{Corollary}{Corollaries}
\crefname{proposition}{Proposition}{Propositions}
\crefname{claim}{Claim}{Claims}
\crefname{procedure}{Procedure}{Procedures}
\crefname{algorithm}{Algorithm}{Algorithms}
\crefname{figure}{Figure}{Figures}
\crefname{remark}{Remark}{Remarks}
\crefname{section}{Section}{Sections}
\crefname{procedure}{Procedure}{Procedures}
\crefname{definition}{Definition}{Definitions}
\crefname{example}{Example}{Examples}
\crefname{table}{Table}{Tables}
\crefname{equation}{}{}
\crefname{enumi}{}{}

\crefname{algocf}{Algorithm}{Algorithms}

\title{\bf{\LARGE{A Homogenization Approach for Gradient-Dominated Stochastic Optimization}}\footnote{J. Tan and C. Xue contribute equally.}}

\author{Jiyuan Tan\thanks{Stanford University. Email: jiyuantan@stanford.edu} 
$\quad$ Chenyu Xue \thanks{Shanghai University of Finance and Economics. Email: xcy2721d@gmail.com} 
$\quad$  Chuwen Zhang\thanks{Shanghai University of Finance and Economics. Email: chuwzhang@gmail.com} 
$\quad$  Qi Deng\thanks{Shanghai Jiao Tong University. Email: qdeng24@sjtu.edu.cn} 
$\quad$ Dongdong Ge\thanks{Shanghai Jiao Tong University. Email: ddge@sjtu.edu.cn} 
$\quad$ Yinyu Ye\thanks{Stanford University. Email: yyye@stanford.edu}}

\begin{document}
\maketitle

\begin{abstract}
     Gradient dominance property is a condition weaker than strong convexity, yet sufficiently ensures global convergence even in non-convex optimization. This property finds wide applications in machine learning, reinforcement learning (RL), and operations management. In this paper, we propose the stochastic homogeneous second-order descent method (SHSODM) for stochastic functions enjoying gradient dominance property based on a recently proposed homogenization approach. Theoretically, we provide its sample complexity analysis, and further present an enhanced result by incorporating variance reduction techniques. Our findings show that SHSODM matches the best-known sample complexity achieved by other second-order methods for gradient-dominated stochastic optimization but without cubic regularization. Empirically, since the homogenization approach only relies on solving extremal eigenvector problem at each iteration instead of Newton-type system, our methods gain the advantage of cheaper computational cost and robustness in ill-conditioned problems.
     Numerical experiments on several RL tasks demonstrate the better performance of SHSODM compared to other off-the-shelf methods.
\end{abstract}

\section{Introduction}
This paper considers the following non-convex optimization problem under the stochastic approximation framework~\citep{robbins1951stochastic},
\begin{equation}
    \label{eq:expectation}
    \min_{\bx \in \bbR^n} F(\bx) \coloneqq \EE_{\xi \sim \mathcal{D}} \left[ f(\bx, \xi) \right],
\end{equation}
where $\xi$ is the random variable sampled from a distribution $\mathcal{D}$, and $n$ is the dimension of the decision variable $\bx$. The above framework encompasses a wide range of problems, ranging from the offline setting, wherein the objective function is minimized over a pre-determined number of samples, to the online setting, where the samples are drawn sequentially from the same distribution.

Since $F(\cdot)$ is non-convex,
finding its global optimum is NP-hard in general \citep{pardalos_quadratic_1991, hillar2013most}. Therefore, a practical goal is to find a \textit{stationary point}\footnote{A stationary point is a point $\bx$ satisfying $\|\nabla F(\bx)\| \leq \mathcal{O}(\epsilon)$.}. From the perspective of sample complexity, such a goal can be achieved using $\mathcal{O}(\epsilon^{-3})$ stochastic gradient and Hessian-vector product. Surprisingly, the result cannot be improved using any stochastic $p$-th order methods for $p \geq 2$ with standard Lipschitz continuity assumptions \citep{arjevani_second-order_2020}. From this point of view, the benefits of second-order information seem limited.

In real life, however, problem \cref{eq:expectation} usually bears some structures that enable global convergence and fast convergence rate, such as weakly strong convexity~\citep{necoara2019linear}, error bounds~\citep{luo1993error}, and restricted secant inequality~\citep{zhang2013gradient}. 
Among these conditions, it is shown in \cite{karimi2016linear} that the Polyak-\L{}ojasiewicz (P\L{}) condition~\citep{polyak1963gradient} is generally weaker, which can be covered by the \emph{gradient dominance property} studied in this paper. Informally, we say function $F(\cdot)$ is gradient-dominated with parameter $\alpha$ if there exists a constant $C > 0$ such that $F(x) - F(x^*) \leq C \|\nabla F(x)\|^{\alpha}$, where $x^* \in \arg\min_{x} F(x)$. When $\alpha = 2$, it reduces to the P\L{} condition. This property also enjoys many important applications in different fields, including generalized linear models~\citep{foster2018uniform}, ResNet with linear activation~\citep{hardt2016identity}, operations management~\citep{fatkhullin2022sharp}. 
Moreover, \cite{agarwal2021theory} show that a weaker version of gradient dominance property with $\alpha  = 1$ is also satisfied in policy-based reinforcement learning (RL).

In this paper, we study the sample complexity required by ensuring \textit{global optimality} in non-convex stochastic optimization with the gradient dominance property. Specifically, we are interested in the number of queries of stochastic gradient and Hessian along the iterations until reaching a point $x$ that satisfies $\mathbb{E} [ F(x) ] - F(x^*) \leq \epsilon$.

Different from Lipschitz continuity assumptions, gradient dominance property witnesses an improved sample complexity (or iteration complexity) for both stochastic and deterministic second-order methods \citep{nesterov2006cubic,chayti2023unified,masiha2022stochastic} when compared to the first-order methods \citep{nguyen2019tight,fontaine2021convergence}. 
Nevertheless, a common drawback of these second-order methods lies in their dependence on the expensive $\mathcal{O}(n^3)$ computational cost at each iteration to obtain an approximate solution to an inevitable \textit{cubic-regularized subproblem}\footnote{In our discussion, we solve linear systems via computing the matrix inverse directly, or the direct method. Although there are some indirect methods for solving linear systems, they either have worse dependence on condition number, or have the same computational cost as our proposed method.}. Recently, \cite{zhang2022homogenous} develop a novel homogeneous second-order method (HSODM), which only needs to find the leftmost eigenvector of an augmented matrix at each iteration with $\tilde{\mathcal{O}}(n^2)$\footnote{In this paper, we use $\tilde{\mathcal{O}}(\cdot)$ to ignore the logarithmic factors.} computational cost \citep{kuczynski_estimating_1992}. They prove that HSODM not only reduces $\mathcal{O}(n^3)$ computational burden required by cubic-regularization, but also achieves the optimal iteration complexity \citep{carmon2021lower}.

Hence, a natural question arises:
\begin{quote}
    \textit{Can the homogenization approach be extended to gradient-dominated stochastic optimization, and maintain the state-of-the-art sample complexity?}
\end{quote}
In this paper, we give an affirmative answer to the question above. Our contributions are summarized as follows:
\begin{enumerate}

    \item First, we propose two non-trivial customized strategies to extend HSODM from non-convex optimization to gradient-dominated optimization. The success of HSODM is attributed to its fixed choice of the last diagonal element of the augmented matrix, which can not directly apply to gradient-dominated optimization (we will explain it in \cref{section:two-strategies}). To tackle this challenge, we develop a perturbation strategy and design a novel parameter-searching strategy to construct the augmented matrix, both of which significantly differs from HSODM, and may be of independent interest to extend HSODM to convex optimization.
    
    \item Second, we propose a variant of HSODM for gradient-dominated stochastic optimization, SHSODM, and analyze its sample complexity for stochastic function enjoying gradient dominance property with $\alpha \in [1,2]$. When $\alpha \in [1,3/2)$, we further provide an enhanced result by employing variance reduction techniques. Specifically, the sample complexity of SHSODM can be improved to $ \mathcal{O}\left( \epsilon ^{- 2/\alpha}\right)$. Our results match the best-known sample complexity in the literature obtained by the stochastic cubic-regularized Newton method (SCRN, \citealt{masiha2022stochastic}). For clarity, we give the detailed results\footnote{In \cite{masiha2022stochastic}, they do not give the sample complexity of SCRN with variance reduction explicitly when $\alpha \in [1, 3/2)$. However, by the similar technique they use when $\alpha=1$, we derive the corresponding result and present it here.} in \cref{table:comparsion}.
    
    \item Finally, SHSODM only requires solving an eigenvalue problem at each iteration, hence overcoming the heavy computational burden of second-order methods. Empirically, we test SHSODM in the context of RL, whose objective function enjoys gradient dominance property with $\alpha = 1$, and compare its performance with SCRN and other standard RL algorithms. The numerical experiments demonstrate that SHSODM is superior to these methods and immune to ill-conditioning.
\end{enumerate}
\begin{table*}[htbp]
    \caption{Sample complexity of different algorithms for gradient-dominated stochastic optimization. The third to sixth columns present the sample complexity, per iteration cost, whether the algorithm needs to solve linear systems at each iteration, and whether it matches the best-known result, respectively. Note that SGD represents stochastic gradient descent method, the prefix ``VR'' stands for the variance reduction version, and ``w.h.p.'' stands for ``with high probability''.}
    \label{table:comparsion}
    \centering
    \small
    \begin{tabular}{lclccc}
        \hline
        Algorithm         & $\alpha $                      & Sample Complexity                  & Per Iteration Cost & Free of Linear System & Best-known   \\
        \hline
        SGD   &   $ \multirow{5}{*}{$[1, 3/2)$} $   & $\mathcal{O}(\epsilon^{ -4/\alpha + 1 })$    & $\mathcal{O}(n)$             & \Checkmark              & \XSolidBrush \\
        SCRN  &    & $\mathcal{O}(\epsilon^{-7/(2\alpha) + 1 })$  & $\mathcal{O}(n^3)$           & \XSolidBrush            & \XSolidBrush   \\
        VR-SCRN  &                & $\mathcal{O}(\epsilon^{-2/\alpha})$  & $\mathcal{O}(n^3)$           & \XSolidBrush            & \Checkmark   \\
        \textbf{SHSODM} &    & $\mathcal{O}(\epsilon^{-7/(2\alpha) + 1 })$  & $\tilde{\mathcal{O}}(n^2  )$ & \Checkmark              & \XSolidBrush   \\
        \textbf{VR-SHSODM} &     & $\mathcal{O}(\epsilon^{-2/\alpha})$  & $\tilde{\mathcal{O}}(n^2  )$ & \Checkmark              & \Checkmark   \\
        \hline
        SGD   & $ \multirow{3}{*}{$3/2$}$      & $\mathcal{O}(\epsilon^{ - 5 /3  })$          & $\mathcal{O}(n)$             & \Checkmark              & \XSolidBrush \\
        SCRN  &                                & $ \mathcal{O}( \epsilon^{ -4/3 }\log(1/\epsilon) ) $ & $\mathcal{O}(n^3)$           & \XSolidBrush            & \Checkmark   \\
        \textbf{SHSODM} &                                & $ \mathcal{O}( \epsilon^{ -4/3 }\log(1/\epsilon) )$   & $\tilde{\mathcal{O}}( n^2 )$ & \Checkmark              & \Checkmark   \\
        \hline
        SGD   & $ \multirow{3}{*}{$(3/2, 2]$}$ & $\mathcal{O}(\epsilon^{ -4/\alpha + 1 })$    & $\mathcal{O}(n)$             & \Checkmark              & \XSolidBrush \\
        SCRN  &                                & $\mathcal{O}(\epsilon^{-2/\alpha}\log\log(1/\epsilon))~\text{w.h.p.}$ & $\mathcal{O}(n^3)$           & \XSolidBrush            & \Checkmark   \\
        \textbf{SHSODM} &                                & $\mathcal{O}(\epsilon^{-2/\alpha}\log\log(1/\epsilon))$  & $ \tilde{\mathcal{O}}(n^2) $ & \Checkmark              & \Checkmark   \\
        \hline
    \end{tabular}
\end{table*}

The rest of the paper is organized in the following manner. In the remainder of the section, we review some related literature. \cref{section:prelimi} gives a formal definition of gradient dominance property considered in this paper, and a brief introduction to HSODM. In \cref{section:two-strategies}, we propose two nontrivial customized strategies to extend HSODM to gradient-dominated optimization. In \cref{subsec:shsodm}, we formally describe our SHSODM and prove its sample complexity, which is improved later in \cref{subsec:sto-hsodm-vr} by adapting the variance reduction technique. \cref{section:numerical} provides the numerical experiments in RL, and demonstrates the superior performance of SHSODM to SCRN and other widely-used RL algorithms. Finally, \cref{section:conclusion} concludes the paper and presents several future research directions.

\subsection{Related Work}
We review some papers studying gradient dominance property and second-order methods for gradient-dominated optimization in both deterministic and stochastic settings. The more comprehensive literature review including first-order methods is provided in the Appendix.

\textbf{Gradient-dominated optimization and its applications.} The gradient dominance property with $\alpha = 2$ (or the P\L{} condition) is first introduced in \citet{polyak1963gradient}. It is strictly weaker than strong convexity which is sufficient to guarantee the global linear convergence rate for the first-order methods. \citet{karimi2016linear} further show that the P\L{} condition is weaker than most of the global optimality conditions that appeared in the machine learning community. The gradient dominance property is also established locally or globally under some mild assumptions for problems such as phase retrieval~\citep{zhou2017characterization}, blind deconvolution~\citep{li2019rapid}, neural network with one hidden layer~\citep{li2017convergence,zhou2017characterization}, linear residual neural networks~\citep{hardt2016identity}, and generalized linear model and robust regression~\citep{foster2018uniform}. Meanwhile, it is worth noting that in policy-based RL, a weak version of gradient dominance property with $\alpha = 1$ holds for some certain classes of policies, such as the Gaussian policy~\citep{yuan2022general}.

\textbf{Second-order methods.} The original analysis for second-order methods under gradient dominance property appears in \citet{nesterov2006cubic}. They focus on the cubic-regularized Newton method (CRN) for $\alpha \in \{1, 2\}$. When $\alpha = 2$, they show that the algorithm has a superlinear convergence rate. When $\alpha = 1$, they prove that CRN has a two-phase pattern of convergence. The initial phase terminates superlinearly, while the second phase achieves an iteration complexity of $\mathcal{O}({\epsilon}^{-1/2})$. Afterward, the result in \citet{zhou2018convergence} gives a more fine-grained analysis of CRN for some functions satisfying K\L{} property (it covers the gradient dominance property except for the case $\alpha = 1$) by partitioning the interval $(1,2]$ into $(1, 3/2) $, $\{3/2\} $, and $ (3/2, 2] $, which enjoy sublinear, linear, and superlinear convergence rate, respectively. When it comes to the stochastic setting, \cite{masiha2022stochastic} obtain the sample complexity of SCRN for gradient-dominated stochastic optimization, which is the best-known result under this setting. Recently, \cite{chayti2023unified} consider the finite-sum setting. Nevertheless, all these papers rely on approximate solutions of cubic-regularized sub-problems.

\section{Preliminaries}
\label{section:prelimi}
In this section, we provide the preliminaries of our paper and introduce the novel second-order method, HSODM. First, We formally define the gradient dominance property.
\begin{assumption}[Gradient Dominance]\label{asp:gd_wdom}
    We say function $F(\bx)$ has the weak gradient dominance property with $ \alpha \in [1, 2]$, if there exist $C_{\text{weak}}>0$ and $\epsilon_{\text{weak}} > 0$ such that for all $ \bx \in \mathbb{R}^n$, it holds that
    \begin{equation}\label{eq:wgd}
        F(\bx) - F(\bx^*) \leq C_{\text{weak}} \|\nabla F(\bx)\|^\alpha + \epsilon_{\text{weak}}
    \end{equation}
    where $\bx^* \in \arg\min F(\bx)$.
    
    Furthermore, if $\epsilon_{\text{weak}} = 0$, we say function $F(\bx)$ has the strong gradient dominance property, that is,
    \begin{equation}\label{eq:gd}
        F(\bx) - F(\bx^*) \leq C_{\text{gd}} \|\nabla F(\bx)\|^\alpha
    \end{equation}
\end{assumption}
In this paper, we consider the gradient-dominated function with $\alpha \in [1,2]$. Note again when $\alpha = 2$, \cref{eq:gd} is the P\L{} condition. If $\alpha = 1$, \cref{eq:wgd} relates to the gradient dominance property discussed in policy-based RL. We also refer readers to \citet{fatkhullin2022sharp} for more concrete examples. 




\textbf{A Brief Overview of HSODM.} We now introduce the framework of HSODM. The key ingredient of HSODM is the homogenized quadratic model (HQM) constructed at each iterate $\bx_k, k=1, 2, \ldots K$. At the $k$-th iteration, it builds a gradient-Hessian augmented matrix $A_k$ and solves a homogeneous quadratic optimization problem. Specifically, the following optimization problem is considered:
\begin{equation}
    \label{eq:homo-model}
    \begin{split}
        \min_{\|[\bv; t]\| \leq 1}
        \begin{bmatrix}
            \bv \\ t
        \end{bmatrix}^T
        \begin{bmatrix}
            \bH_k   & \bg_k   \\
            \bg_k^T & -\delta \\
        \end{bmatrix}
        \begin{bmatrix}
            \bv \\ t
        \end{bmatrix},
    \end{split}
\end{equation}
where $\bg_k$ is the gradient and $ \bH_k  $ is the Hessian. For ease of exposition, we define $A_k = [\bH_k,\bg_k; g_k^T,-\delta]$ and let $[ \bv_k; t_k]$ be the optimal solution to the above problem \cref{eq:homo-model}. In the next lemma, we characterize the optimal solution $[v_k; t_k]$.
\begin{lemma}[\citeauthor{zhang2022homogenous}, \citeyear{zhang2022homogenous}]
    \label{lemma:opt-cond-homo}
    Denote by $[\bv_k;t_k]$ the optimal solution to problem \cref{eq:homo-model}. We have:
    \begin{enumerate}
        \item[(1)]  There exists a dual variable $\theta_k \geq 0$ such that
            \begin{align}
                \label{eq.homoeig.soc}
            & \begin{bmatrix}
                \bH_k + \theta_k \cdot \bI & \bg_k            \\
                \bg_k^T                    & -\delta+\theta_k
            \end{bmatrix} \succeq 0, \\
            \label{eq.homoeig.foc}
            & \begin{bmatrix}
                \bH_k + \theta_k \cdot \bI & \bg_k            \\
                \bg_k^T                    & -\delta+\theta_k
            \end{bmatrix}
                \begin{bmatrix}
                    \bv_k \\ t_k
                \end{bmatrix} = 0,   \\
                \label{eq.homoeig.norm one} & \theta_k \cdot ( \|[\bv_k; t_k]\| - 1 ) = 0.
            \end{align}
            Moreover, $ -\theta_k = \lambda_{\min}(A_k)$.
        \item[(2)] If $t_k \neq 0$, then it holds that    \begin{equation}\label{eq.homoeig.foc t neq 0}
                    \bg_k^T \bd_k = \delta -\theta_k,~~ (\bH_k+\theta_k \cdot \bI)\bd_k =-\bg_k,
                \end{equation}
                where $\bd_k =\bv_k / t_k$.
        \item[(3)] If $t_k = 0$, $-\theta_k$ is the smallest eigenvalue of $H_k$ and $ g_k^Tv_k = 0 $.
    \end{enumerate}
\end{lemma}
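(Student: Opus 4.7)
The plan is to treat \eqref{eq:homo-model} as a spherically constrained quadratic program in the $(n{+}1)$-dimensional variable $[\bv;t]$ and to read off both parts of the lemma from the standard KKT / Mor\'e--Sorensen characterization applied to the augmented matrix $A_k$. Since the objective is continuous and the feasible set $\{\|[\bv;t]\|\le 1\}$ is compact, a minimizer $[\bv_k;t_k]$ exists, and the argument reduces to a careful bookkeeping of first- and second-order optimality conditions together with a non-degeneracy check at $[\mathbf{0};1]$.

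\textbf{Part 1.} First I would form the Lagrangian $L([\bv;t],\theta)=[\bv;t]^{\T}A_k[\bv;t]+\theta(\|[\bv;t]\|^{2}-1)$ and invoke KKT to obtain a multiplier $\theta_k\ge 0$ with (i)~stationarity $(A_k+\theta_k \bI)[\bv_k;t_k]=0$, which is exactly \eqref{eq.homoeig.foc}; (ii)~second-order necessity $A_k+\theta_k \bI\succeq 0$, which is \eqref{eq.homoeig.soc}; and (iii)~complementarity $\theta_k(\|[\bv_k;t_k]\|^{2}-1)=0$. For the norm identity \eqref{eq.homoeig.norm one}, I would mimic standard trust-region reasoning: any eigenvector of $A_k$ for $\lambda_{\min}(A_k)<0$ already attains strictly smaller value than the trial point $[\mathbf{0};1]$ of value $-\delta_k\le 0$, so the minimizer must lie on the boundary; in the exceptional case $A_k\succeq 0$, the point $[\mathbf{0};1]$ itself has unit norm. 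Because the smallest $\theta$ that makes $A_k+\theta \bI\succeq 0$ while keeping $[\bv_k;t_k]$ in its kernel is $-\lambda_{\min}(A_k)$, the stationarity relation identifies $\theta_k=-\lambda_{\min}(A_k)$ (the statement $\theta_k=-\lambda_{\min}(\bH_k)$ in the lemma seems to use the augmented matrix convention). Finally, to upgrade $\theta_k\ge\delta_k$ to the strict inequality $\theta_k>\delta_k$, I would read off the $(n{+}1,n{+}1)$ entry of the PSD matrix in \eqref{eq.homoeig.soc}: it equals $\theta_k-\delta_k\ge 0$, and if it vanished, PSDness would force the corresponding row $\bg_k^{\T}$ to vanish too, ruled out under the non-degeneracy assumption $\bg_k\ne 0$ (the case $\bg_k=0$ being vacuous for the iterative scheme).

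\textbf{Part 2.} For the second claim, assuming $t_k\ne 0$, I would simply divide the two block rows of \eqref{eq.homoeig.foc} by $t_k$ and set $\bd_k=\bv_k/t_k$. The top block becomes $(\bH_k+\theta_k \bI)\bd_k+\bg_k=0$, which rearranges to $(\bH_k+\theta_k \bI)\bd_k=-\bg_k$; the bottom block becomes $\bg_k^{\T}\bd_k+(\theta_k-\delta_k)=0$, i.e.\ $\bg_k^{\T}\bd_k=\delta_k-\theta_k$. Both identities of \eqref{eq.homoeig.foc t neq 0} follow immediately.

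\textbf{Main obstacle.} I expect the only non-routine step to be establishing \eqref{eq.homoeig.norm one} together with the strict inequality $\theta_k>\delta_k$, since each requires excluding a degenerate configuration (the quadratic form $[\bv;t]^{\T}A_k[\bv;t]$ is indefinite in general, and the trivial candidate $[\mathbf{0};1]$ of value $-\delta_k$ must be ruled out from being optimal whenever $\bg_k\ne 0$). Everything else in the proof reduces to a direct application of KKT and elementary block-matrix algebra.
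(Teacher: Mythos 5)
The paper does not prove this lemma; it is imported verbatim from \citet{zhang2022homogenous}, so there is no in-paper argument to compare against. Your reconstruction via the KKT conditions and the Mor\'e--Sorensen (trust-region) characterization of the spherically constrained quadratic program is the standard route and is correct: existence by compactness, stationarity and global second-order optimality give \eqref{eq.homoeig.foc}--\eqref{eq.homoeig.soc}, $\lambda_{\min}(A_k)<0$ forces the minimizer onto the unit sphere, the zero-diagonal argument on the $(n{+}1,n{+}1)$ entry yields $\theta_k>\delta_k$ whenever $\bg_k\neq 0$, and Part~2 is just division by $t_k$. Your parenthetical caveat is also well taken: combining \eqref{eq.homoeig.soc} and \eqref{eq.homoeig.foc} identifies $\theta_k=-\lambda_{\min}(A_k)$, not $-\lambda_{\min}(\bH_k)$ (interlacing only gives $\theta_k\geq -\lambda_{\min}(\bH_k)$, with equality precisely in the hard case $t_k=0$), so the ``Moreover'' clause as printed is a typo for the aggregated matrix, consistent with the paper's own usage $\lambda_k(\delta_k)=\lambda_{\min}(A_k(\delta_k))$ in Algorithm~\ref{algo:ls}. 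The only degenerate configuration your argument excludes by assumption ($\bg_k=0$, $\delta_k=0$, $\bH_k\succeq 0$, where $A_k\succeq 0$ and the conclusion $\theta_k>\delta_k$ genuinely fails) is indeed vacuous for the algorithm, so nothing essential is missing.
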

\cref{lemma:opt-cond-homo} states that, if one sets $\delta \ge 0$ for the non-convex function, the augmented matrix $A_k$ must be negative definite. Meanwhile, the negative optimal dual solution, i.e., $-\theta_k$, is the smallest eigenvalue of $A_k$ and the optimal primal solution $[\bv_k, t_k]$ is its associated eigenvector. Moreover, when $t_k \neq 0$, the constructed direction $d_k$ is a descent direction by \cref{eq.homoeig.foc t neq 0}. Hence, one can update the next iterate by $\bx_{k+1} = \bx_k + \eta_k d_k$, where $\eta_k > 0$ is the stepsize. The convergence analysis of HSODM under the deterministic non-convex setting heavily depends on the fixed choice of $\delta = \Theta(\sqrt{\epsilon})$. However, this strategy for choosing $\delta$ cannot directly apply to gradient-dominated stochastic optimization, which will be discussed in detail in \cref{section:two-strategies}.

From the perspective of computational complexity, solving \cref{eq:homo-model} is essentially solving an extreme eigenvalue problem with respect to the augmented matrix $A_k$, since it has been shown by \cref{eq.homoeig.norm one} that $[ \bv_k; t_k]$ always attains the boundary of the unit ball. In view of this, the subproblems can be solved within the time complexity of $\tilde{\mathcal{O}}(n^2)$ \citep{kuczynski_estimating_1992}, enjoying cheaper computation than that in classical CRN~\citep{nesterov2006cubic} and its stochastic counterpart, where $\mathcal{O}(n^3)$ arithmetic operations are unavoidable. Similar to any second-order methods, HSODM also benefits from Hessian-vector product (HVP), where the Hessian matrix itself is unnecessary to be stored.
\begin{remark}
    To solve \cref{eq:homo-model}, one can apply some Lanczos-type algorithms \citep{kuczynski_estimating_1992}, which only need access to the oracle $A_k [v;t]$ for any given $[v;t]$. To achieve this, one can compute it by
    \begin{equation*}
       A_k [v;t] = \begin{bmatrix}
            \bH_k   & \bg_k   \\
            \bg_k^T & -\delta_k \\
        \end{bmatrix}
        \begin{bmatrix}
            \bv \\ t
        \end{bmatrix} =  \begin{bmatrix}
            \bH_k \bv + t \bg_k    \\
            \bg_k^T \bv - t \delta_k \\
        \end{bmatrix}.
    \end{equation*}
    Hence, only HVP $H_kv$ is needed.
\end{remark}

\section{Customized Strategies}
\label{section:two-strategies}
In this section, we introduce two customized strategies to extend HSODM to the gradient-dominated world. As HSODM is originally designed for non-convex optimization, it cannot directly apply to gradient-dominated functions, which also envelop convex functions with a bounded set. When the fixed choice rule of $\delta$ is used, we cannot connect the decrease in function value to the gradient. Consequently, HSODM only attains an unsatisfactory convergence rate, and several refinements must be adopted to fit our purpose. 

Inspired by the analysis of CRN \citep{nesterov2006cubic}, we need to ensure that $\lambda_k$ and $\|\dk\|$ have the same order and diminish simultaneously when the algorithm proceeds, where $\lambda_k = \lambda_{\min}(A_k)$. In other words, we need to approximately solve
\begin{equation}\label{eq:ls}
    h (\delta_k) = \lambda_k - C_e \| \dk \| = 0,
\end{equation}
where $C_e>0$ is a pre-specified constant. However, this desired relationship cannot be guaranteed by the fixed strategy to choose $\delta$ employed by HSODM. To address this challenge, we propose to adaptively choose $\delta_k$ at each iteration $k$ by a nontrivial line-search procedure.

Before delving into the analysis, we first parameterize the augmented matrix $A_k$ via $\delta_k$. Specifically, we let 
\begin{equation*}
    A_k( \delta_k ) = \begin{bmatrix}
        \Hk              & \gk       \\
        \gk^T & -\delta_k
    \end{bmatrix}. 
\end{equation*}
Thus, $\lambda_k$ and $d_k$ can also be seen as functions of $\delta_k$ (recall that $[v_k; t_k]$ is the leftmost eigenvalue of $A_k(\delta_k)$):
$$\dk(\delta_k) := v_k / t_k, \lambda_k (\delta_k ) = \lambda_{\min}(A_k(\delta_k)).$$
Therefore, we are now able to adjust $\delta_k$ of $A_k$ to find a better descent direction such that $\lambda_k$ and $\|\dk\|$ have the same order.
\begin{algorithm}[!htb]
\SetAlgoLined
\caption{Linesearch}
\label{algo:ls}
\KwIn{Current iterate $\xk$, $\Hk,\gk$, tolerance $\epsilon_{\text{ls}},\epsilon_{\text{eig}}$, initial search interval $ [\delta_l,\delta_r]$, ratio $ C_e $.}
Call \cref{algo:per} with $g_k, H_k, \epsilon_{\text{eig}}$ and collect $\gk'$\;
\For{$j=1,...,J_k$}{
Let $ \delta_m = (\delta_l + \delta_r) / 2 $ \;
Construct $A_k(\delta_k) := [\Hk, \gk'; (\gk')^T, \delta_m]$\;
Calculate the leftmost eigenpair $ \left( \lambda_k, [v_k;t_k] \right) $ of $A_k(\delta_m)$\;
Calculate $d_k := v_k / t_k$\;
\eIf{ $ C_e \left\| d_k \right\| \leq  |\lambda _k| $ }{
$ \delta_l \leftarrow \delta_m $\; 
}{
    $ \delta_r \leftarrow \delta_m $\; 
    }
\eIf{$|\delta_r-\delta_l| < \epsilon_{\text{ls}}$}{
\Return{$ \delta_m, d_k $}\;
    }
{
    $j \leftarrow j+1$;    
}
}
\end{algorithm}

In \cref{algo:ls}, we provide the linesearch procedure to adaptively adjust $\delta_k$ at each iteration $k$. In particular, we use binary search to find an appropriate $\delta_k$. We terminate the procedure if the search interval is sufficiently small, and conclude that the dual variable $\lambda_k$ approximately has the order of $\|d_k\|$.
However, the caveat of the above procedure is that a degenerate solution may exist if $g_k$ is orthogonal to the leftmost eigenspace $\mathcal S_{\min}$ of $\Hk$. In this case, the eigenvector provides no information about the gradient and (\ref{eq:ls}) may not have a solution. Such a cumbersome case is often regarded as a ``hard case'' in the literature of trust-region methods \citep{conn2000trust}. To overcome this obstacle, we use a random perturbation over $\gk$, through which the perturbed gradient $g_k'$ is no longer orthogonal to the minimal eigenvalue space $\mathcal{S}_{\min}$, i.e., $\mathcal{P}_{\mathcal S_{\min}}(g_k') \geq \epsilon_\text{eig}$, where $\epsilon_\text{eig}$ is the pre-determined tolerance and $\mathcal{P}_{\mathcal S_{\min}}(\cdot)$ represents the projection of a given vector onto $\mathcal{S}_{\min}$. 

We prove in the Appendix that, with the linesearch strategy after perturbation, an approximate solution to \cref{eq:ls} can always be found due to that $\lambda_k$ is continuous over $\delta_k$. Interestingly, such perturbation strategy is ``one-shot'' if needed, whose details are presented in \cref{algo:per}. For cases where $\mathcal S_{\min}$ consists of multiple eigenvectors, one can simply compute the inner product of $\gk$ and any $v\in \mathcal S_{\min}$, and adopt the perturbation if it is not sufficiently bounded away from zero. Besides, it is known that power method also applies in such scenarios, since only the projection is needed \citep{golub_matrix_2013}. The finite-step termination property of \cref{algo:ls} is guaranteed by the following theorem.
\begin{algorithm}[!htb]
\SetAlgoLined
\caption{A Perturbation Strategy}
\label{algo:per}
\KwIn{Current iterate $\xk$, $\Hk,\gk$, tolerance $\epsilon_{\text{eig}} $.}
Compute the projection of $ g $ to the minimal eigenvalue space $\mathcal{P}_{\mathcal S_{\min}}( g) $ \;
\eIf{$ \Vert \mathcal{P}_{\mathcal S_{\min}}( g)\Vert \geqslant \epsilon_{\text{eig}}$}{
\Return $g$\;
}{
    $ g' \leftarrow g+\epsilon _{\text{eig}} \cdotp \mathcal{P}_{\mathcal S_{\min}}( g) / \Vert \mathcal{P}_{\mathcal S_{\min}}( g)\Vert $\;
    \Return{$g'$}\;
    }
\end{algorithm}

 
\begin{theorem}[Finite-step Termination of \cref{algo:ls}] \label{lemma:ls_err}
\cref{algo:ls} terminates in $\mathcal{O}(\log( 1 /\epsilon _{\text{ls}} \epsilon _{\text{eig}}))$ steps. Furthermore, it produces an estimate $\hat{\delta }_{C_e}$ of $\delta_k$ and the direction $\dk$ such that $|h(\hat{\delta }_{C_e}) |\leq \epsilon _{\text{ls}}$, where $\epsilon _{\text{ls}} > 0$ is the tolerance.
\end{theorem}

The above theorem shows that the number of iterations $J_k$ for \cref{algo:ls} is at most $\mathcal{O}(\log( 1/\epsilon _{\text{ls}} \epsilon _{\text{eig}}))$. The extra invoking of \cref{algo:per} is only necessary if we find $\gk \perp \mathcal S_{\min}$. Since the computational complexity of \cref{algo:per} is within $\tilde{\mathcal{O}}(n^2 \epsilon^{-1/4})$ arithmetic operations and consistent with the computational cost of \cref{algo:ls}, we still harbor an advantage of cheap computational cost compared to solving cubic-regularized subproblems.

\section{SHSODM for Gradient-Dominated Stochastic Optimization}
\label{section:shsodm-and-vr-shsodm}

In \cref{subsec:shsodm}, we give the details of our SHSODM for the gradient-dominated stochastic optimization when $\alpha \in [1, 2]$ and provide its sample complexity analysis. For the case $\alpha \in [1, 3/2)$, we further incorporate the variance reduction techniques into SHSODM and present an improved sample complexity result in \cref{subsec:sto-hsodm-vr}.

\subsection{SHSODM}
\label{subsec:shsodm}
We begin by outlining the key steps of SHSODM. Firstly, we randomly draw a sample set $S$, and use it to construct the stochastic approximation of the gradient and the Hessian, respectively, by $ \hat{g}_k = \nabla_{S} F( x_{k}) = \sum _{i=1}^{|S|} \nabla f
( \bx ,\xi _{i}) / |S|$ and $ \hat{H}_k = \nabla_{S}^2 F( x_{k})  = \sum _{i=1}^{|S|} \nabla^2 f ( \bx ,\xi _{i}) / |S |$. Then, at each iteration $k$, we employ \cref{algo:ls} and \cref{algo:per} (if necessary) to obtain the update direction $d_k$, which must terminate in $\mathcal{O}(\log( 1/\epsilon _{\text{ls}} \epsilon _{\text{eig}}))$ iterations. Finally, we choose the stepsize $\eta_k = 1$ for all $k$ and update $x_{k+1} = x_k + d_k$. The details are provided in \cref{algo:hsodm}.

Before analyzing the sample complexity of SHSODM, we make some assumptions used throughout the paper.
\begin{assumption}
    \label{assum:smooth}
    Assume that function $F(\cdot)$ is twice differentiable. The gradient of $f(x, \xi)$ and the Hessian of $F(x)$ are Lipschitz continuous, respectively. That is, there exists $L_g > 0$ and $L_H > 0$ such that $\Vert \nabla f(\bx,\xi) -\nabla f(\bx,\xi) \Vert \leq L_g\Vert \bx-\by\Vert$ and $\Vert \nabla^2 F(\bx) -\nabla^2 F(\by) \Vert \leq L_H \Vert \bx-\by\Vert$ for all $\bx, \by \in \bbR^n $ and $\xi$ almost surely.
\end{assumption}
\begin{assumption}[\citealt{masiha2022stochastic}]\label{asp:var}
    Assume that for each query point $\bx \in \mathbb{R}^d$, the stochastic gradient and Hessian are unbiased, and their variance satisfies $\EE[\left\| \nabla F(\bx) - \nabla f(\bx, \xi) \right\|^2 ]\leq \sigma^2_g$ and $\EE[\left\| \nabla^2 F(\bx) - \nabla^2 f(\bx, \xi) \right\|^{2\alpha} ]\leq \sigma^2_{h,\alpha}$, where $\sigma_g > 0$ and $\sigma_{h, \alpha} > 0$ are two constants.
\end{assumption}
\cref{assum:smooth} ensures the Hessian of function $F(\cdot)$ and the gradient of stochastic function $f(x, \xi)$ are both Lipschitz continuous, which is widely used in the optimization literature \citep{nesterovLecturesConvexOptimization2018, masiha2022stochastic, chayti2023unified}. \cref{asp:var} guarantees the stochastic gradient and Hessian are unbiased and have bounded variances, which is standard in stochastic optimization and also used by \cite{masiha2022stochastic}. 
\begin{remark}
When analyzing SGD under the non-convex setting, \cite{khaled2022better} propose the expected smoothness assumption, which is weaker than \cref{asp:var}. However, it is difficult to analyze the behavior of second-order algorithms under this assumption, since it cannot control the term $\mathcal{O}(\mathbb{E}[\|g_k-\hat{g}_k\|^2]^{\alpha/2}+\mathbb{E}[\|H_k-\hat{H}_k\|^{2\alpha}])$ emerging in the analysis, which critically influences the sample complexity. Hence, We leave the relaxation of \cref{asp:var} for future work.
\end{remark}
\begin{algorithm}[!htb]
    \caption{SHSODM}
    \label{algo:hsodm}
    \KwIn{Total number of iterations $K$, sample size $n_g,n_H$, tolerance $\epsilon_{\text{ls}},\epsilon_{\text{eig}}$, lower bound $\delta_l$ and upper bound $\delta_r$ of the linesearch procedure}
    \For{$k\leftarrow 1$ \KwTo $K$}{
    Draw samples with $ |S^g_k| = n_g$ and $|S^H_k| = n_H  $ \\
    Construct the empirical estimators $ \hat{g}_k $ and $\hat{H}_k  $ \\
    Call \cref{algo:ls} with $(\hat{g}_k, \hat{H}_k, \epsilon_{\text{ls}}, \epsilon_{\text{eig}}, \delta_l, \delta_r)$ to obtain $ \delta_k $ and $d_k $\;
    Update current point $ x_{k+1} = \bx_k + d_k$.\;
    }
    \Return{$\bx_K$}\;
\end{algorithm}

In the next theorem, we give the sample complexity of SHSODM. It partitions the interval $[1, 2]$ into three non-overlapping subsets, and provides respectively the corresponding sample complexity. When $\alpha = 1$, SHSODM achieves the worst sample complexity of $\mathcal{O}(\epsilon^{-2.5})$. While when $\alpha = 2$, it achieves the best sample complexity of $\tilde{\mathcal{O}}(\epsilon^{-2.5})$. This result matches the one of SCRN \citep{masiha2022stochastic}. However, our SHSODM does not need to solve linear systems, hence its per-iteration cost is less than SCRN's, which is also validated by \cref{section:numerical}. We also remark that, theoretically, we only need the access to stochastic gradient and HVP, instead of stochastic Hessian. We present the remaining theorems below in the form of sampling Hessian only for the comparison with SCRN.
\begin{theorem}[Sample Complexity of SHSODM]\label{thm:hsodm}
    Suppose that function $F(\cdot)$ satisfies \cref{assum:smooth}, \cref{asp:var} and \cref{asp:gd_wdom} with equation \cref{eq:gd} for some $\alpha \in [1, 2]$. Given tolerance $\epsilon$ and let $n_g = \mathcal{O}(\epsilon^{-2/\alpha})$, $n_H = \mathcal{O}(\epsilon^{-1/\alpha})$, $\epsilon _{\text{eig}} =\mathcal{O}( \epsilon ^{1/\alpha })$, and $\epsilon _{\text{ls}} =\mathcal{O}( \epsilon ^{1/\alpha })$.
    Then \cref{algo:hsodm} outputs a solution $ \bx_K $ such that $ \mathbb{E}[F(\bx_K)-F(\bx^*)]\le\epsilon $ after $K$ iterations, where
    \begin{enumerate}
        \item[(1)] If $ \alpha \in [ 1,3/2)$, then $K  = \mathcal{O}(\epsilon^{-3/(2\alpha)+1}))$ with a sample complexity of $\mathcal{O}(\epsilon ^{-7/( 2\alpha ) +1})$.
        \item[(2)] If $\alpha =3/2$, then $K =\mathcal{O}(\log( 1/\epsilon))$ with a sample complexity of $O\left(\log( 1/\epsilon ) \epsilon ^{-2/\alpha }\right) $.
        \item[(3)] If $\alpha \in ( 3/2,2]$, then $ K = \mathcal{O}(\log\log( 1/\epsilon))$ with a sample complexity of $O\left(\log\log( 1/\epsilon ) \epsilon ^{-2/\alpha }\right)$.
    \end{enumerate}
\end{theorem}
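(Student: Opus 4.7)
The plan is to combine a stochastic descent inequality for the HSODM step with a second-order stationarity residual bound and the gradient dominance property of \cref{asp:gd_wdom} to derive a one-step recursion on $\Delta_k := F(\bx_k) - F(\bx^*)$, and then solve this recursion separately for each regime of $\alpha$.

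First, I would establish the stochastic descent lemma. Starting from the Lipschitz Hessian expansion
\begin{equation*}
F(\bx_{k+1}) \le F(\bx_k) - \langle \nabla F(\bx_k), \dk\rangle + \tfrac12 \dk^\T \nabla^2 F(\bx_k) \dk + \tfrac{L_2}{6}\|\dk\|^3,
\end{equation*}
I would substitute the first-order optimality condition \eqref{eq.homoeig.foc t neq 0} (which is stated for $\hat g_k, \hat H_k$), introduce the stochastic discrepancies $\hat g_k - \nabla F(\bx_k)$ and $\hat H_k - \nabla^2 F(\bx_k)$, apply the termination guarantee $|\lambda_k - C_e\|\dk\|| \le \epsilon_\text{ls}$ from \cref{lemma:ls_err}, and use Young's inequality to absorb the mixed terms $\|\hat g_k - \gk\|\cdot\|\dk\|$ and $\|\hat H_k - H_k\|\cdot\|\dk\|^2$ into $c\|\dk\|^3$ plus additive errors. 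Taking expectations and invoking \cref{asp:var} yields
\begin{equation*}
\EE[F(\bx_{k+1}) - F(\bx_k)] \le - c_1\, \EE[\|\dk\|^3] + E(n_g, n_H, \epsilon_\text{ls}, \epsilon_\text{eig}),
\end{equation*}
where $E(\cdot)$ can be made $O(\epsilon)$ on the $\Delta_k$-scale by the stated choices of sample sizes and tolerances.

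Second, I would convert this into a recursion on $\Delta_k$ via a residual bound on the true gradient at the new iterate. From Taylor's theorem, $\nabla F(\bx_{k+1}) = \nabla F(\bx_k) + \nabla^2 F(\bx_k)\dk + O(\|\dk\|^2)$, and after replacing $\nabla F(\bx_k), \nabla^2 F(\bx_k)$ with $\hat g_k, \hat H_k$ and using the identity $(\hat H_k + \theta_k \bI)\dk = -\hat g_k$ together with the calibrated bound $\theta_k \propto \|\dk\|$ from the line search, one obtains $\|\nabla F(\bx_{k+1})\| \le c_2 \|\dk\|^2 + \text{stoch.\ error}$. The gradient dominance inequality \eqref{eq:gd} then yields $\Delta_{k+1} \le C_\text{gd}\|\nabla F(\bx_{k+1})\|^\alpha \le c_3 \|\dk\|^{2\alpha} + O(\epsilon)$. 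Combining with the descent step and eliminating $\|\dk\|$ gives the implicit recursion $\EE[\Delta_{k+1}] \le c_3 (\EE[\Delta_k] - \EE[\Delta_{k+1}])^{2\alpha/3} + O(\epsilon)$, which in the regime $\EE[\Delta_k] \ge \epsilon$ reduces to the explicit form
\begin{equation*}
\EE[\Delta_{k+1}] \le \EE[\Delta_k] - c_4 \,(\EE[\Delta_k])^{3/(2\alpha)}.
\end{equation*}

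Third, I would solve this recursion with $p := 3/(2\alpha)$. For $\alpha \in [1, 3/2)$, $p > 1$ and a standard telescoping of $1/\Delta_k^{p-1}$ yields $K = O(\epsilon^{1-p}) = O(\epsilon^{-3/(2\alpha)+1})$. For $\alpha = 3/2$, $p = 1$ gives linear convergence $K = O(\log(1/\epsilon))$. For $\alpha \in (3/2, 2]$, $p < 1$, and once $\Delta_k$ falls below a problem-dependent threshold one has $\Delta_k - \Delta_{k+1} \ge \Delta_k/2$, so the implicit bound collapses to the multiplicative $\EE[\Delta_{k+1}] \le C(\EE[\Delta_k])^{2\alpha/3}$ with exponent $2\alpha/3 > 1$, giving doubly-exponential convergence $K = O(\log\log(1/\epsilon))$. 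Multiplying $K$ by the per-iteration sample cost $n_g + n_H = O(\epsilon^{-2/\alpha})$ recovers the stated sample complexities.

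The main obstacle will be controlling the stochastic errors at the correct $\alpha$-dependent scale. A naive moment bound gives $\|\hat g_k - \gk\| = O(\sigma_g / \sqrt{n_g})$ and would force $n_g = O(\epsilon^{-2})$; the sharper rate $n_g = O(\epsilon^{-2/\alpha})$ exploits the fact that gradient dominance rescales the error-to-optimality-gap conversion through the exponent $\alpha$, so one only needs $\|\hat g_k - \gk\|^\alpha = O(\epsilon)$. The $2\alpha$-moment hypothesis in \cref{asp:var} is tailored so that $\|\hat H_k - H_k\|\cdot\|\dk\|^2$ balances against $\|\dk\|^3$ under Young's inequality with only $n_H = O(\epsilon^{-1/\alpha})$ samples; tracking these exponents carefully is delicate. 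Finally, the super-linear case $\alpha > 3/2$ is subtle because the subtractive form $\Delta_{k+1} \le \Delta_k - c\Delta_k^p$ with $p < 1$ does not directly give a Newton-like rate, and one must invoke the implicit form together with a threshold argument to recover the doubly-exponential convergence underlying the $\log\log$ bound.
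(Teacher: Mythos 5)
Your proposal is correct and follows essentially the same route as the paper: a Lipschitz--Hessian descent lemma combined with a gradient residual bound $\|\nabla F(\bx_{k+1})\| \lesssim \|d_k\|^2 + \lambda_k\|d_k\| + \text{noise}$ (the paper's \cref{lemma:descent}), gradient dominance to obtain the implicit recursion $\Delta_{k+1}\le C(\Delta_k-\Delta_{k+1})^{2\alpha/3}$ up to $O(\epsilon)$ error terms (\cref{thm:rate}), and a three-regime analysis of that recursion (\cref{lemma:seq}), with the sample sizes calibrated exactly as you describe via $\epsilon_{\text{noise}}=O(\epsilon^{1/\alpha})$. The only cosmetic difference is that you solve the recursion in the explicit subtractive form while the paper telescopes $h(x)=x^{1-1/\beta}$ directly on the implicit form; these are interchangeable.
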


In the context of policy-based RL, \cref{asp:gd_wdom} holds with equation \cref{eq:wgd} for $\alpha = 1$. The following corollary emphasizes the sample complexity of SHSODM under this specific setting, which is used in our numerical experiments.
\begin{corollary}[Informal, Sample Complexity for Policy-Based RL]\label{cor:rl}
Under policy-based RL, \cref{assum:smooth} and \cref{asp:var} hold. Moreover, \cref{asp:gd_wdom} holds with equation \cref{eq:wgd} for $ \alpha = 1 $. Then, \cref{algo:hsodm} outputs a solution $ \bx_K $ such that $ \mathbb{E}[F(\bx_K)-F(\bx^*)]<\epsilon + \epsilon_{\text{weak}}$ with a sample complexity of $ \mathcal{O}(\epsilon ^{-2.5}) $.
\end{corollary}
As a byproduct, we also study the performance of the deterministic counterpart of SHSODM for the gradient-dominated function. The next corollary shows it matches the convergence rate proved for CRN \citep{nesterov2006cubic}, and still avoids solving the linear system.
\begin{corollary}[Deterministic Setting]
    Under the gradient-dominated deterministic setting, the iteration complexity of the deterministic counterpart of SHSODM is $ \mathcal{O}(\epsilon ^{-3/( 2\alpha ) +1}) $ when $ \alpha \in [ 1,3/2)$, $ O\left(\log( 1/\epsilon ) \right)$ when $ \alpha =3/2$, and $ O\left(\log\log( 1/\epsilon ) \right)$ when $ \alpha \in ( 3/2,2]$.
\end{corollary}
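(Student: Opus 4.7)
The plan is to specialize the proof of \cref{thm:hsodm} to the noise-free regime, where the stochastic Hessian and gradient estimators are replaced by their exact counterparts and all variance terms in \cref{asp:var} vanish ($\sigma_g = \sigma_{h,\alpha} = 0$). First I would show that the line-search in \cref{algo:ls}, combined with Lipschitz Hessian, yields a per-iteration descent bound of the form
\begin{equation*}
    F(\bx_k) - F(\bx_{k+1}) \geq c_1 \|\bd_k\|^3, \qquad \|\nabla F(\bx_{k+1})\| \leq c_2 \|\bd_k\|^2,
\end{equation*}
so that $F(\bx_k) - F(\bx_{k+1}) \geq c \|\nabla F(\bx_{k+1})\|^{3/2}$ for a constant $c>0$ depending on $C_e$, the Hessian-Lipschitz constant, and the line-search tolerance. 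This exactly reproduces the core descent estimate behind the $O(\epsilon^{-3/2})$ second-order-stationary rate of HSODM, and its derivation uses \cref{lemma:opt-cond-homo} together with \cref{lemma:ls_err} to substitute out $\theta_k$ and $\delta_k$ in the expansion of $F$ around $\bx_k$.

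Next I would plug in the gradient-dominance condition \cref{eq:gd} evaluated at $\bx_{k+1}$, which gives $\|\nabla F(\bx_{k+1})\| \geq (\Delta_{k+1}/C_{\mathrm{gd}})^{1/\alpha}$ with $\Delta_k := F(\bx_k) - F(\bx^*)$. Combining with the descent bound yields the one-step recursion
\begin{equation*}
    \Delta_k \; \geq \; \Delta_{k+1} \,+\, c^{\prime}\, \Delta_{k+1}^{\,3/(2\alpha)},
\end{equation*}
for some $c^\prime>0$ independent of $k$. All three complexity regimes then follow from a standard analysis of this recursion with exponent $\beta := 3/(2\alpha)$.

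I would then perform the case split on $\beta$ versus $1$. When $\alpha \in [1, 3/2)$, we have $\beta > 1$, and an inductive argument (or the integral comparison $\int \Delta^{-\beta}\,d\Delta$) gives $\Delta_K = O(K^{-1/(\beta-1)})$, hence iteration complexity $K = O(\epsilon^{-(\beta-1)}) = O(\epsilon^{-3/(2\alpha)+1})$. When $\alpha = 3/2$, we have $\beta = 1$ so $\Delta_k \geq (1+c^\prime)\Delta_{k+1}$, giving linear contraction and $K = O(\log(1/\epsilon))$. When $\alpha \in (3/2, 2]$, we have $\beta < 1$, so from $\Delta_{k+1} \leq (\Delta_k/c^\prime)^{1/\beta}$ (valid in the tail, where $\Delta_k$ is small) we get doubly-exponential decay $\log(1/\Delta_k) \gtrsim (1/\beta)^k$, and hence $K = O(\log\log(1/\epsilon))$.

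The main obstacle, and the only substantive work beyond reusing \cref{thm:hsodm}, is verifying that in the noiseless setting the descent inequality degenerates into a clean deterministic form (no sample-size or $\epsilon_{\mathrm{eig}}$/$\epsilon_{\mathrm{ls}}$ slack terms leaking into the final rate). This reduces to choosing $\epsilon_{\mathrm{ls}}$ and $\epsilon_{\mathrm{eig}}$ small enough relative to $\|\nabla F(\bx_{k+1})\|$ at each step, which in the deterministic setting can be done adaptively without affecting asymptotic complexity. Once this descent is in place, the super/linear and sublinear rates follow by the standard recursion-analysis template used above, and match the classical CRN rates of \citet{nesterov_cubic_2006} and the refined analysis of \citet{zhou2018convergence}.
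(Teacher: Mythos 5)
Your proposal is correct and follows essentially the same route as the paper: the paper proves this corollary by specializing the chain \cref{lemma:descent} $\to$ \cref{thm:rate} $\to$ \cref{lemma:seq} to the noise-free case, which is exactly your descent estimate $F(\bx_k)-F(\bx_{k+1}) \gtrsim \|\bd_k\|^3$, $\|\nabla F(\bx_{k+1})\| \lesssim \|\bd_k\|^2$, the gradient-dominance substitution, and the three-way recursion analysis (your recursion $\Delta_k - \Delta_{k+1} \geq c'\Delta_{k+1}^{3/(2\alpha)}$ is the paper's $\Delta_{k+1} \leq C_\Delta(\Delta_k - \Delta_{k+1})^{2\alpha/3}$ with the reciprocal exponent, and your handling of the $\epsilon_{\mathrm{eig}},\epsilon_{\mathrm{ls}}$ slack matches the paper's choice of vanishing tolerances). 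No substantive difference.
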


\subsection{SHSODM with Variance Reduction}
\label{subsec:sto-hsodm-vr}
In the previous analysis, we employ the same batch size at each iteration. 
In this subsection, we enhance the sample complexity of SHSODM when $ \alpha \in [1,3/2)$ via time-varying batch size and variance reduction technique proposed in \cite{fang2018spider}. 
In particular, we let $K_C$ be the period length and $S_k$ be the sample set to estimate gradient at iteration $k$. 
Then, we construct different empirical estimators of the gradient and the Hessian based on whether $k$ is a multiple of $K_C$. 
In \cref{algo:hsodm_vr}, we present the explicit forms of the newly introduced estimators, and give the details of SHSODM with variance reduction technique, or VR-SHSODM mentioned in \cref{table:comparsion}. 

The next theorem specifies the choice of time-varying batch size and shows the sample complexity of VR-SHSODM can be further improved to $ O\left( \epsilon ^{-2/\alpha}\right) $ when $ \alpha \in [1,3/2)$. This also applies to weak gradient dominance property with $\alpha \in [1,3/2)$. When $\alpha = 1$, we enhance the sample complexity from $\mathcal{O}(\epsilon^{-2.5})$ in the last subsection to $\mathcal{O}(\epsilon^{-2})$, which improves upon the best-known sample complexity of SGD (or stochastic policy gradient method for RL) and matches the result of SCRN. 
\begin{theorem}[Sample Complexity of VR-SHSODM]
\label{thm:vr}
Under the same assumptions of \cref{thm:hsodm}, if $ \alpha \in [ 1,3/2)$, $ K_C =\mathcal{O}(K)$, and 
{\small
\begin{align*}
    n_{k,g} &=\begin{cases}
    \mathcal{O}( k^{4/( 3-2\alpha )})  & k \bmod K_C=0 \\
    O\left(\Vert d_{k}\Vert ^{2} K_C( \lfloor  k / K_C  \rfloor K_C)^{4/( 3-2\alpha )}\right) & k \bmod K_C\neq 0
    \end{cases}
\end{align*}
}
Then, \cref{algo:hsodm_vr} outputs a solution $ \bx_K $ such that $\mathbb{E}[F(\bx_K)-F(\bx^*)]\le \epsilon$ in $K = \mathcal{O}(\epsilon^{-3/(2\alpha)+1})$ iteration with sample complexity $O\left( \epsilon ^{-2/\alpha}\right) $.
\end{theorem}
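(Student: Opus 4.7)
The proof strategy follows the template of \cref{thm:hsodm} for plain SHSODM, but replaces the uniform-in-$k$ variance bounds $\sigma_g^2/n_g,\;\sigma_{h,\alpha}^{2}/n_H$ with a tighter, iteration-adaptive estimate produced by the SPIDER telescoping construction. The plan has four steps: (i) derive a running variance bound of the form $\mathbb{E}\|v_k-\nabla F(x_k)\|^2 = O(k^{-4/(3-2\alpha)})$, and a parallel bound for the Hessian estimator; (ii) substitute this sharper estimate into the per-step descent inequality underlying \cref{thm:hsodm}; (iii) invoke gradient dominance to obtain the iteration count $K = O(\epsilon^{-3/(2\alpha)+1})$; and (iv) sum the prescribed batch sizes to reach the target $O(\epsilon^{-(7-3\alpha)/(2\alpha)})$.

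For step (i), fix an epoch of length $K_C$ anchored at the checkpoint $k_0 = \lfloor k/K_C\rfloor K_C$. Using independence of the fresh sample set $S_j$ at each iteration and Lipschitz continuity of $\nabla F$, the standard SPIDER recursion gives
\begin{equation*}
\mathbb{E}\|v_k - \nabla F(x_k)\|^2 \;\le\; \mathbb{E}\|v_{k_0} - \nabla F(x_{k_0})\|^2 + L^2\sum_{j=k_0+1}^{k}\frac{\|d_{j-1}\|^2}{n_{j,g}},
\end{equation*}
with an analogous telescoping bound on $\mathbb{E}\|H_k-\nabla^2 F(x_k)\|^{2\alpha}$ from the Lipschitz Hessian assumption. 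Plugging in the stated batch sizes makes the checkpoint term equal to $O\bigl(k_0^{-4/(3-2\alpha)}\bigr)$, while each summand is of order $1/\bigl(K_C\, k_0^{4/(3-2\alpha)}\bigr)$; summing $K_C$ such terms reproduces the same rate, so the total variance at iteration $k$ is $O\bigl(k^{-4/(3-2\alpha)}\bigr)$.

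For steps (ii) and (iii), I would reuse the descent inequality derived in the proof of \cref{thm:hsodm}. Schematically,
\begin{equation*}
\mathbb{E}[F(x_{k+1})] - F(x_k) \;\le\; -c_1\|d_k\|^3 + c_2\bigl(\mathbb{E}\|v_k - \nabla F(x_k)\|^2\bigr)^{1/2}\|d_k\| + c_3\bigl(\mathbb{E}\|H_k - \nabla^2 F(x_k)\|^{2\alpha}\bigr)^{1/\alpha}\|d_k\|^2,
\end{equation*}
so the sharper variance estimates leave the negative $\|d_k\|^3$ term dominant. Combining with $\|g_k\|\gtrsim (F(x_k)-F^*)^{1/\alpha}$ from gradient dominance and the line-search proportionality $\|d_k\|\asymp|\lambda_k|$ furnished by \cref{lemma:ls_err} reproduces the recursion $F_{k+1}\le F_k - c\,F_k^{3/(2\alpha)}$ in expectation, which integrates to $K = O(\epsilon^{-3/(2\alpha)+1})$. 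Finally, the $K/K_C$ checkpoint iterations contribute at most $(K/K_C)\cdot K^{4/(3-2\alpha)}$ samples, while the inter-checkpoint cost is bounded by $K_C K^{4/(3-2\alpha)}\sum_{k\le K}\|d_k\|^2$, and telescoping the descent inequality yields $\sum_k\|d_k\|^2 = O(1)$ for $\alpha\in[1,3/2)$ (the sequence $\|d_k\|^2 \asymp F_k^{1/\alpha}$ is summable since the induced exponent exceeds one in this regime). Choosing $K_C = K^{\alpha/(3-2\alpha)}$ balances the two contributions and yields the advertised $O(\epsilon^{-(7-3\alpha)/(2\alpha)})$ total.

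The principal technical obstacle is the interaction between the SPIDER telescoping and the adaptive line search: the direction $d_k$ is a nonlinear, data-dependent function of the noisy $(v_k,H_k)$, so one must argue that \cref{lemma:ls_err} still certifies $\|d_k\|\asymp|\lambda_k|$ once its inputs are themselves random. The cleanest route is to condition on the pair $(v_k,H_k)$ at each iteration, so that \cref{algo:ls} behaves deterministically given these stochastic inputs, and then verify that the one-shot perturbation from \cref{algo:per} introduces only an $O(\epsilon_{\text{eig}})$ adjustment whose additional randomness is absorbed into the $\epsilon^{1/\alpha}$-level error budget and does not disrupt the variance-reduction telescoping.
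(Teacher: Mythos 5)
Your steps (i)--(iii) coincide with the paper's argument: the SPIDER telescoping bound $\mathbb{E}\|v_k-\nabla F(x_k)\|^2 \le \sigma^2/n_{k_0,g} + 4L_1'\sum_{j>k_0}\|d_j\|^2/n_{j,g} = O\bigl(k^{-4/(3-2\alpha)}\bigr)$, the reduction of the Hessian error to the same order as the gradient error, and the reuse of the descent recursion from \cref{thm:hsodm} to obtain $K=O(\epsilon^{-3/(2\alpha)+1})$ are all exactly what the paper does. The genuine gap is in step (iv), the sample count. You bound the inter-checkpoint cost by $K_CK^{4/(3-2\alpha)}\sum_{k\le K}\|d_k\|^2$ and invoke $\sum_k\|d_k\|^2=O(1)$. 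This replaces the epoch-dependent weight $(\lfloor k/K_C\rfloor K_C)^{4/(3-2\alpha)}$ in $n_{k,g}$ by its terminal value $K^{4/(3-2\alpha)}$, which is a gross overestimate precisely where $\|d_k\|^2$ is largest (the early epochs). With $K_C=K^{\alpha/(3-2\alpha)}$ your bound evaluates to $K^{(4+\alpha)/(3-2\alpha)}$, whereas the target is $K^{1+(4-\alpha)/(3-2\alpha)}$; the discrepancy is a factor $K^{(4\alpha-3)/(3-2\alpha)}=\epsilon^{-(4\alpha-3)/(2\alpha)}$, which for $\alpha=1$ turns the claimed $\epsilon^{-2}$ into $\epsilon^{-5/2}$ --- i.e., no improvement over plain SHSODM, so the whole point of the variance reduction is lost.

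The fix is to keep the epoch-dependent weight inside the sum and pair it with the \emph{decay} of the per-epoch displacement. From \cref{lemma:descent}, $\|d_i\|^2\lesssim\bigl(F(x_i)-F(x_{i+1})+\text{errors}\bigr)^{2/3}$, so by H\"older over an epoch of length $K_C$,
\begin{equation*}
\mathbb{E}\Bigl[\textstyle\sum_{i=(k-1)K_C}^{kK_C-1}\|d_i\|^2\Bigr]\;\lesssim\;K_C^{1/3}\,\mathbb{E}\bigl[F(x^{(k-1)K_C})-F(x^{kK_C})\bigr]^{2/3}+\cdots\;=\;O\bigl(K_C^{1/3}(kK_C)^{-4\alpha/(3(3-2\alpha))}\bigr),
\end{equation*}
where the last step uses the rate $F(x^{kK_C})-F^*=O\bigl((kK_C)^{-2\alpha/(3-2\alpha)}\bigr)$ from \cref{thm:rate}. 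Multiplying by the weight $K_C(kK_C)^{4/(3-2\alpha)}$ and summing over epochs gives $K_C^{1/3}K^{1+(12-4\alpha)/(3(3-2\alpha))}$, which, balanced against the checkpoint cost $K^{1+4/(3-2\alpha)}/K_C$ at $K_C=K^{\alpha/(3-2\alpha)}$, yields the advertised $O(\epsilon^{-(7-3\alpha)/(2\alpha)})$. Note also that your two-sided claim $\|d_k\|^2\asymp F_k^{1/\alpha}$ is only available as a lower bound (via gradient dominance and $\|g_{k+1}\|\lesssim\|d_k\|^2$); the upper bound the analysis actually provides is in terms of the decrement $(F_k-F_{k+1})^{2/3}$, which is exactly why the H\"older step, and the resulting $K_C^{1/3}$ factor, cannot be avoided.
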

\begin{algorithm}[!htb]
    \SetAlgoLined
    \caption{SHSODM with Variance Reduction}
    \label{algo:hsodm_vr}
    \KwIn{maximum iteration $K$, parameters $K_{C}$, $\epsilon_{\text{ls}}$, $\epsilon_{\text{eig}}$ }
    \For{$k\leftarrow 1$ \KwTo $K$}{
        Draw samples with $|S_k| = n_{k,g}$\;
        \eIf{$k~\mathrm{mod}~K_C = 0$}
        {
        $ v_{k} \leftarrow  \nabla_{S_{k}} f( x_{k}) $\;
        $ H_{k} \leftarrow  \nabla_{S_{k}}^2 f( x_{k}) $\;
        }
        {
        $ v_{k} \leftarrow  \nabla_{S_{k}} f( x_{k}) -\nabla_{S_{k}} f( x_{k-1}) +v_{k-1} $ \;
        $ H_{k} \leftarrow  \nabla_{S_{k}}^2 f(x_{k}) -\nabla_{S_{k}}^2 f(x_{k-1}) +H_{k-1} $ \;}
        Call \cref{algo:ls} with $\hat{g}_k, \hat{H}_k, \epsilon_{\text{ls}}, \epsilon_{\text{eig}}, \delta_l, \delta_r$  to get $ \delta_k, d_k $\;
      Update current point $ x_{k+1} = x_k + d_k$\;
    }
    \Return{$x_K$}\;
\end{algorithm}

\section{Numerical Experiments}
\label{section:numerical}

\begin{figure*}[htb]
\begin{minipage}{0.49\linewidth}
    \centering
    \includegraphics[width=0.9\linewidth]{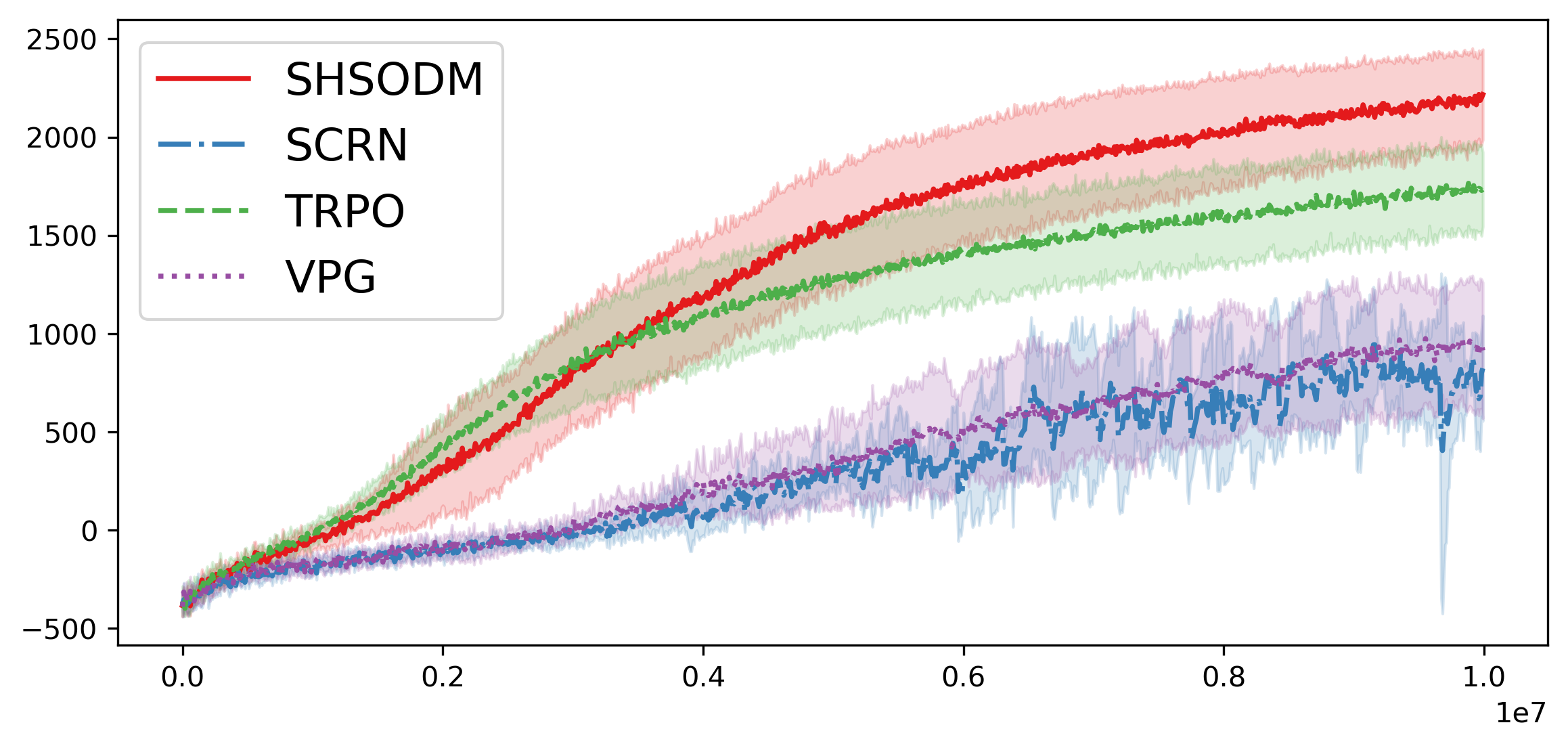}
    \subcaption{HalfCheetah-v2}
\end{minipage}
\begin{minipage}{0.49\linewidth}
    \centering
    \includegraphics[width=0.9\linewidth]{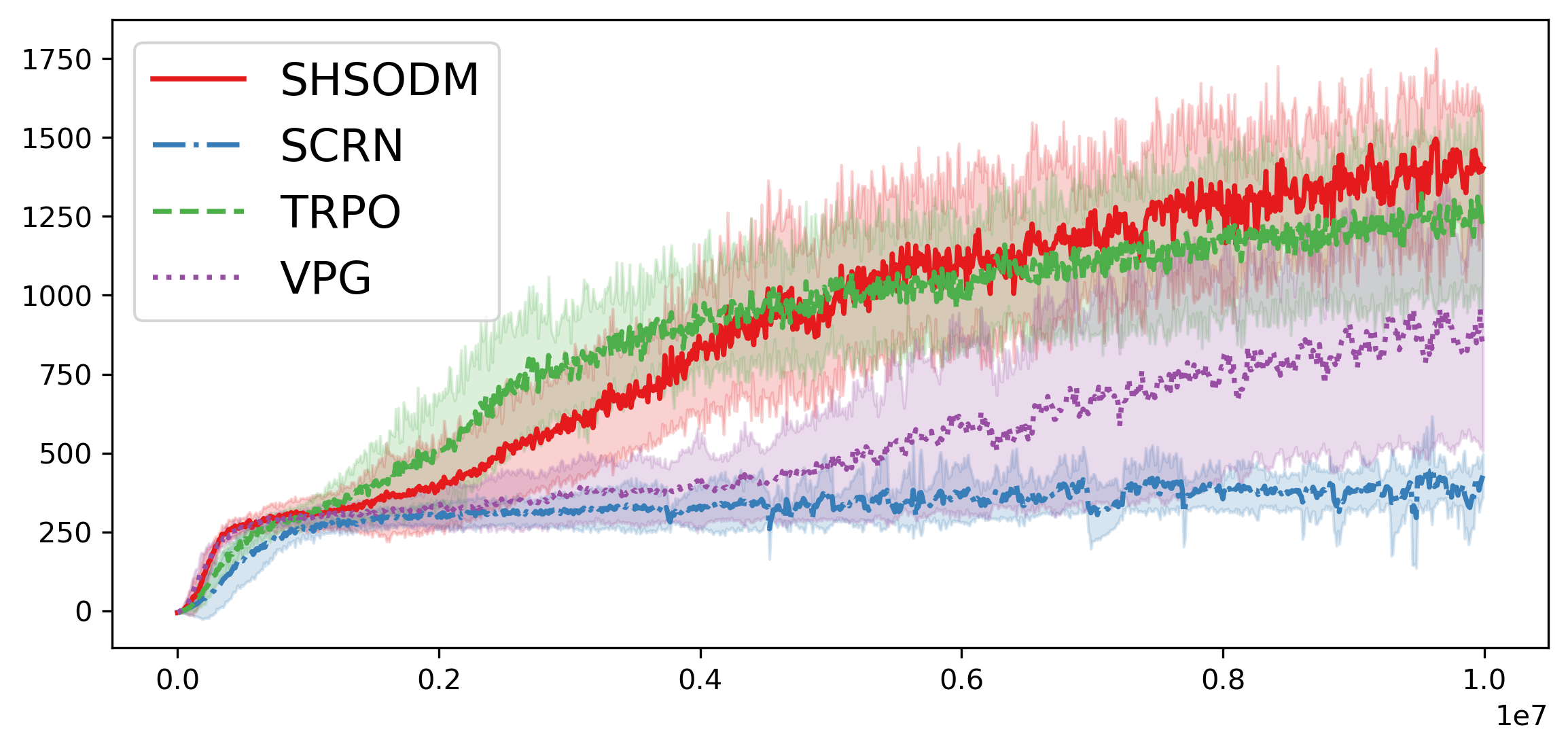}
    \subcaption{Walker2d-v2}
\end{minipage}

\begin{minipage}{0.49\linewidth}
    \centering
    \includegraphics[width=0.9\linewidth]{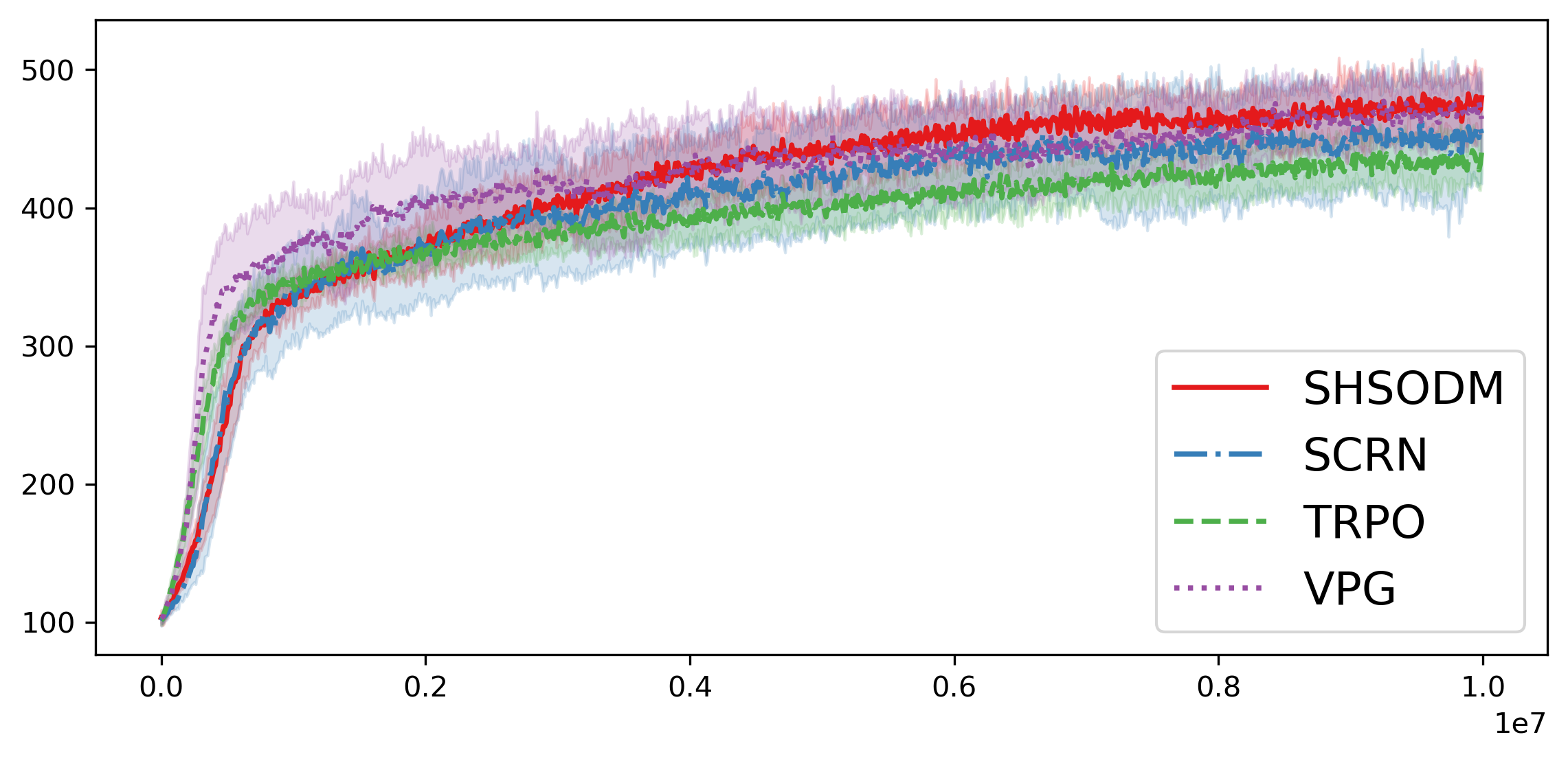}
    \subcaption{Humanoid-v2}
\end{minipage}
\begin{minipage}{0.49\linewidth}
    \centering
    \includegraphics[width=0.9\linewidth]{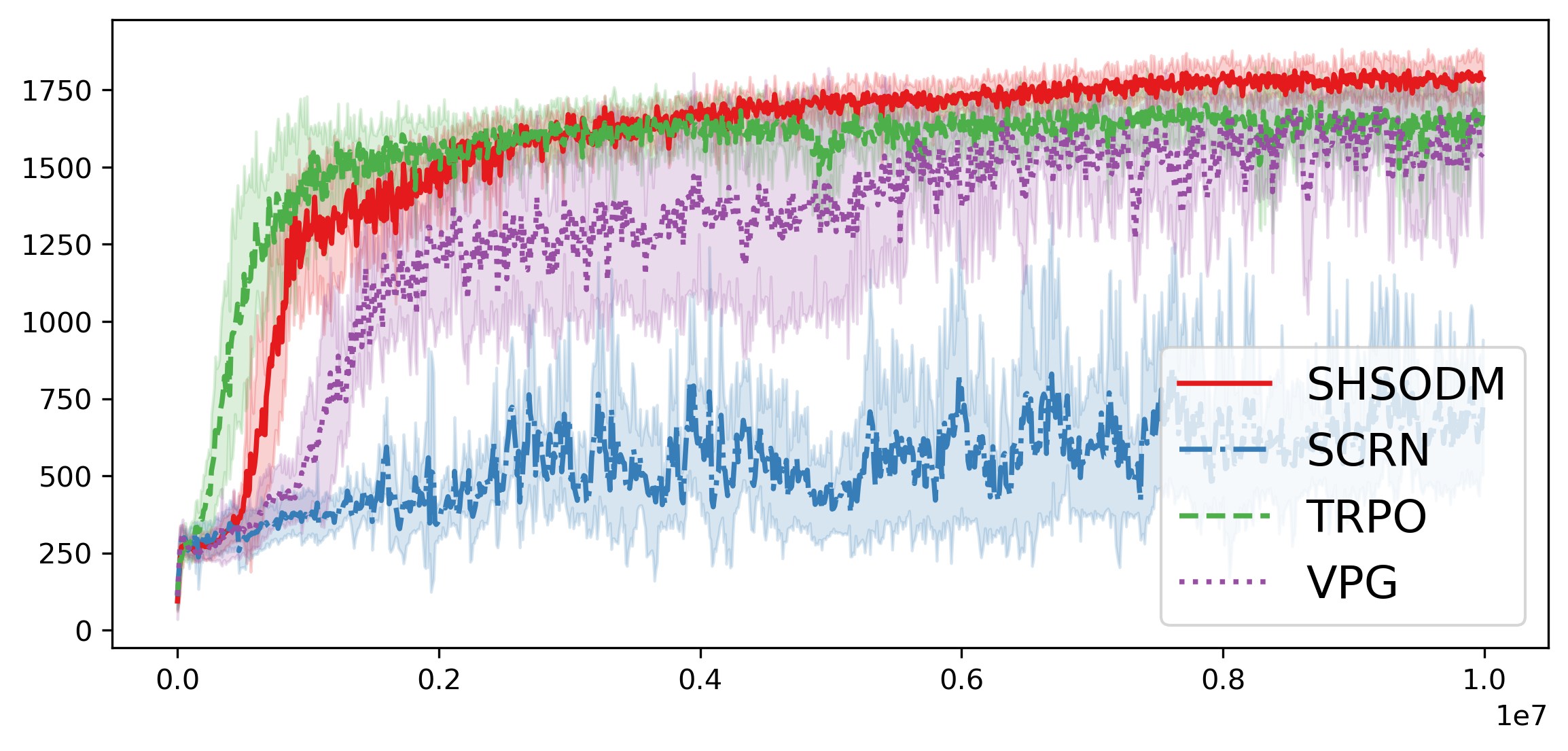}
      \subcaption{Hopper-v2}
\end{minipage}
\caption{\textbf{The $x$-axis and $y$-axis represents respectively system probes and the average return.} The solid curves depict the mean values of five independent simulations, while the shaded areas correspond to the standard deviation.}\label{fig:rl-results}
\end{figure*}

\begin{table*}
    \centering
    \begin{tabular}{lcccc}
    \hline
Environment   & SHSODM & SCRN & TRPO & VPG \\
\hline
HalfCheetah-v2 &  $ \boldsymbol{ 2259 \pm 217 } $     &  $1115 \pm 230$     &  $1822 \pm 214 $  & $1078 \pm 319$   \\
Walker2d-v2 &  $ \boldsymbol{1630 \pm 224} $  &  $ 543 \pm 164 $  &  $ 1389 \pm 265 $ &  $ 1024 \pm 448 $ \\
Humanoid-v2 &  $  \boldsymbol{498 \pm 20} $  &  $ 484 \pm 43 $   &  $ 458 \pm 20 $ & $ 495 \pm 28 $  \\
Hopper-v2 & $\boldsymbol{ 1856 \pm 74 }$ &   $ 1473 \pm 318 $  &  $ 1752 \pm 86 $    &  $ 1779 \pm 67 $  \\
\hline
    \end{tabular}
    \caption{Max average return $\pm$ standard deviation over $5$ trials of $10^7$ system probes. Maximum value for each task is bolded.}
    \label{tab:rl-ma-return}
\end{table*}

In this section, we evaluate the empirical performance of our SHSODM in the context of RL, which serves as a standard scenario for gradient-dominated stochastic optimization \citep{masiha2022stochastic}. In RL, the interaction between the agent and the environment is often described by an infinite-horizon, discounted Markov decision process (MDP), and the agent aims to maximize the discounted cumulative reward. Due to the page limit, we leave the brief introduction of MDP in the Appendix. It can be shown under some standard assumptions that the objective function of MDP has the gradient dominance property with $\alpha = 1$.

We compare our SHSODM with different algorithms, including SCRN~\citep{masiha2022stochastic}, TRPO~\citep{schulman2015trust} and VPG~\citep{williams1992simple}. In the Appendix, we further provide the experiments that compare SHSODM with PPO~\citep{schulman2017proximal}. As mentioned before, SCRN is a second-order method that harbors the best-known sample complexity for gradient-dominated stochastic optimization. TRPO, and its variants of PPO, are among the most important workhorses behind deep RL and enjoy empirical success. Practically speaking, TRPO can be seen as a ``second-order'' method, since it uses the second-order information of the constraints to update the policy. The first-order method VPG is included to serve as a benchmark and validate our theoretical analysis, which is a common practice in the literature \citep{tripuraneni2018stochastic, kohler2017sub, zhou2018convergence}. We implement SHSODM by using garage library \citep{garage} written with PyTorch \citep{paszke2019pytorch}, which also provides the implementation of TRPO and VPG. For SCRN, we use the open-source code offered by \cite{masiha2022stochastic}. 

In our experiments, we test several representative robotic locomotion experiments using the MuJoCo\footnote{\url{https://www.mujoco.org}.} simulator in Gym\footnote{Gym is an open source Python library for developing and comparing RL algorithms; see \url{https://github.com/openai/gym}}. The task of each experiment is to simulate a robot to achieve the highest return through smooth and safe movements. 
Specifically, we consider $4$ control tasks with continuous action space, including \texttt{HalfCheetah-v2}, \texttt{Walker2d-v2}, \texttt{Humanoid-v2}, and \texttt{Hopper-v2}. For each task, we employ a Gaussian multi-layer perceptron (MLP) policy whose mean and variance are parameterized by an MLP with $2$ hidden layers of $64$ neurons and $\tanh$ activation function. We let the batch size be $10^4$, and the number of epoch be $10^3$, leading to a total of $10^7$ time step. To ensure a fair comparison, we adapt the same network architecture for all algorithms.

When computing the discounted cumulative reward, a baseline term is subtracted in order to reduce the variance. It is noteworthy that the resulting gradient estimator is still unbiased. For all methods, we train a linear feature baseline, which has been implemented in garage. For the hyperparameters of each algorithm, we emphasize that garage's implementations of TRPO and VPG are robust, and tuning parameters does not lead to a significant change of the performance. Hence, we use their default parameters. For SCRN, we use grid search to find the best hyperparameter combination for each environment. For SHSODM, we choose the trust region radius $r \in \{ 0.05, 0.08 \}$. Although our theory always sets it to be $1$ (see the constraint in \cref{eq:homo-model}), we make it a hyperparameter in the implementation. To conduct the experiments, we utilize a Linux server with Intel(R) Xeon(R) CPU E5-2680 v4 CPU operating at 2.40GHz and 128 GB of memory, and NVIDIA Tesla V100 GPU.

Following the existing literature \citep{masiha2022stochastic, huang2020momentum}, we use the system probes, i.e., the number of sampled state-action pairs, as the measure of sample complexity instead of the number of trajectories due to that the different trajectories may have varying lengths. For each task, we run each algorithm with a total of $10^7$ system probes, employing $5$ different random seeds for both network initialization and the Gym simulator. 

\textbf{Overall Comparisons.} \cref{fig:rl-results} presents the training curves over the aforementioned environments in MuJoCo. It is clear that our SHSODM outperforms SCRN and VPG in all tested environments. When compared with TRPO, although SHSODM grows slowly at the beginning, it achieves the best final performance with an obvious margin. It can be also observed that SCRN is less robust and even inferior to the first-order algorithm VPG. This phenomenon instead reflects that our SHSODM enjoy more stable performance. To further illustrate the robustness of SHSODM, following \cite{zhao2022stochastic}, we provide the maximal average return and the standard deviation over five trials in \cref{tab:rl-ma-return}. A higher maximal average return indicates the algorithm has the ability to obtain the better agent. \cref{tab:rl-ma-return} shows SHSODM has the highest maximal average return, and outperforms other algorithms over all tested environments.

\textbf{Cost in Computing Directions.} We now compare the computational efficiency of SHSODM with SCRN, since they are both second-order algorithms. We profile the total time required to compute the update direction for the two algorithms over some representative environments. To demonstrate, we let each algorithm run for $10^3$ epochs, and the batch size of each epoch be $10^3$ as well. \cref{fig:time} shows that our SHSODM needs much less time than SCRN, thus it is more efficient. We remark that the result also has some theoretical guarantees. By analyzing the Hessian of the objective function, we observe that its condition number $\kappa$ is huge in each iteration, typically $10^7$. When one uses the conjugate gradient method or the gradient descent method to solve the cubic-regularization subproblem required by SCRN, the time complexity depends heavily on the Hessian's condition number \citep{golub_matrix_2013}. This fact brings the unacceptable computational burden given the huge-condition-number nature of the Hessian. Fortunately, for SHSODM, the Lanczos-type algorithm we use to solve the eigenvalue problem at each iteration is proved to be free of condition number \citep{saadNumericalMethodsLarge2011}. Therefore, SHSODM is more suitable to deal with such ill-conditioned problems emerging in RL. 

\begin{figure}[H]
    \centering
    \includegraphics[scale=0.5]{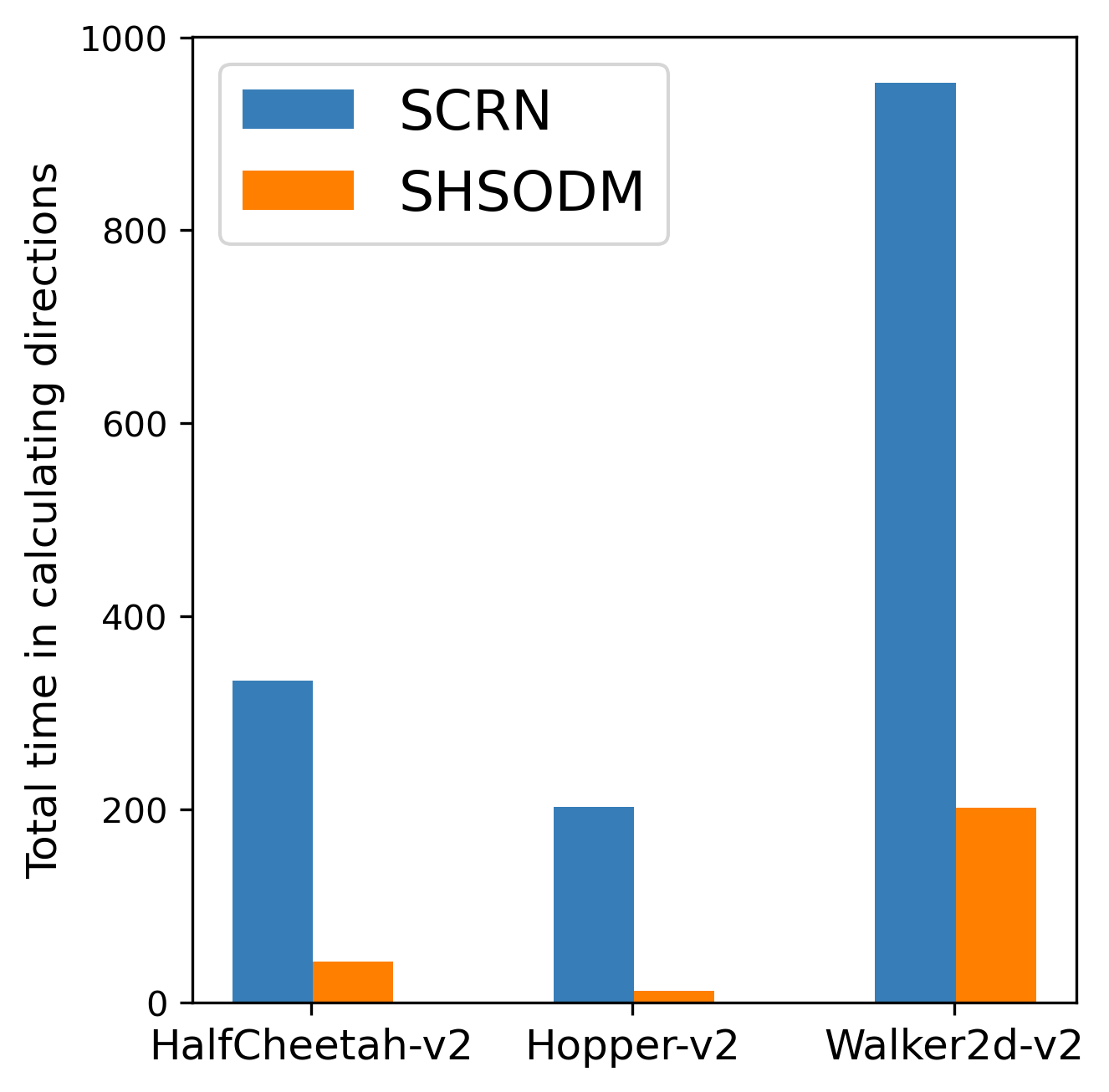}
    \caption{The $x$-axis represents three different tested environments, including \texttt{HalfCheetah-v2}, \texttt{Hopper-v2}, and \texttt{Walker2d-v2}. The $y$-axis presents the total time required to obtain the update direction in $10^3$ epochs.}
    \label{fig:time}
\end{figure}

\section{Conclusion}
\label{section:conclusion}
Gradient dominance property enjoys wide applications in real life. In this paper, we extend HSODM from non-convex optimization to gradient-dominated stochastic world, which requires two extra customized strategies and differs from HSODM in nature. Consequently, we propose a novel stochastic second-order algorithm, SHSODM. It inherits the advantage of HSODM, only requiring solving an eigenvalue problem at each iteration, which is computationally cheaper than the cubic-regularized subproblem required by other second-order methods such as SCRN. Theoretically, we prove that SHSODM has the sample complexity matching the best-known result. 
Meanwhile, we demonstrate by several reinforcement learning tasks that SHSODM not only has a better and more stable performance than SCRN and other widely used algorithms in deep RL, but also is more efficient and robust in handling some ill-conditioned optimization problems in practice. 
For future research, one interesting direction is to apply the homogenization method to stochastic nonsmooth optimization.

\section*{Acknowledgement}
The authors are grateful to the Area Chairs and the anonymous reviewers for their constructive comments. This research is partially supported by the Major Program of National Natural Science Foundation of China (Grant 72394360, 72394364).

\bibliographystyle{plainnat}
\bibliography{arxiv_ref}

\newpage

\appendix

\section{A more comprehensive literature review }

\textbf{First-order methods.} For the deterministic setting, \cite{polyak1963gradient} and \cite{karimi2016linear} prove that under the P\L{} condition ($\alpha=2$), the gradient descent algorithm finds an $\epsilon$-optimal solution in $\mathcal{O}(\log({1}/{\epsilon}))$ gradient evaluations. Then \citet{yue2022lower} show that the convergence rate is optimal for the first-order algorithm. While for the stochastic setting, \cite{khaled2022better} show that stochastic gradient descent (SGD) with time-varying stepsize converges to an $\epsilon$-optimal point with a sample complexity of $\mathcal{O}(1/\epsilon)$ under the P\L{} condition. It is verified in \citet{nguyen2019tight} that the dependency of the sample complexity of SGD on $\epsilon$ is optimal. For general gradient-dominated functions with $\alpha \in [1,2]$, \cite{fontaine2021convergence} obtain a sample complexity of $O ( \epsilon^{-4/\alpha + 1})$ for SGD. Moreover, under the weak gradient dominance property with $\alpha = 1$, which is typically observed in reinforcement learning, it has been shown that stochastic policy gradient converges to the global optimal solution with a sample complexity of $\tilde{\mathcal{O}}(\epsilon^{-3})$~\citep{yuan2022general}.

\section{Proof Sketch}
In this section, we give a sketch of the proof \cref{thm:hsodm}. The proof of the variance reduction version \cref{thm:vr} is similar.\par 
We first analyze \cref{algo:ls} and show that it can output a solution such that $ \|d_k\| = \mathcal{O}(\lambda_l + \epsilon ) $ in $ \mathcal{O}(\log((1+\|g_k\|)/\epsilon_{\text{ls}}\epsilon_{\text{eig}}))$, where $d_k = x_k - x_{k+1}$ (\cref{lemma:ls_err}). \par 
Then, we prove \cref{thm:hsodm}. The key of the proof is to estimate the one-step progress of SHSODM (\cref{lemma:descent}). \cref{lemma:descent} analyzes the progress of objective value 
\begin{equation}\label{eq:sket_f}
    F(x_k) - F(x_{k+1}) \geq \mathcal{O}(\|d_k\|^3- \epsilon),
\end{equation}
where $\epsilon$ is some small error cause by perturbation and randomness, and upper bounds of gradient norm $\|g_{k+1}\|$,
\begin{equation} \label{eq:sket_g}
    \|g_{k+1}\| \leq \mathcal{O}(\|d_k\|^2+\lambda_k\|d_k\|+ \epsilon).
\end{equation}
Note that $ \epsilon$ may not be the same in \cref{eq:sket_f} and \cref{eq:sket_g}, but we omit the constant and use $ \epsilon$ to represent small error. \cref{eq:sket_g} means that if the step size $ \| d_k \| $ is small, the current point is close to the optimal point. The line search (\cref{algo:ls}) guarantees $\lambda_k = \mathcal{O}(\|d_k\|+\epsilon)$ (\cref{lemma:ls_err}). Combine three equations, we get an estimation of gradient norm 
\begin{equation*}
    \|g_{k+1}\| \leq \mathcal{O}(\|d_k\|^2+\epsilon_3 )\leq \mathcal{O}((F(x_{k})-F(x_{k+1}))^{2/3}+\epsilon_1).
\end{equation*}
Now, we use the gradient dominance property. 
\begin{equation} \label{eq:sket_iter}
    F(x_{k+1})-F^*\leq\mathcal{O}(\|g_{k+1}\|^\alpha)\leq \mathcal{O}((F(x_{k})-F(x_{k+1}))^{2\alpha/3}+\epsilon)
\end{equation}
\cref{eq:sket_iter} establishes a recursive relationship of the objective value. Let $\Delta_k = F(x_{k})-F^* $, $\Delta_k$ satisfies 
\begin{equation}\label{eq:sket_delta}
\Delta _{k+1} \leq  C_\Delta( \Delta _{k} -\Delta _{k+1})^{2\alpha /3},
\end{equation}
where $ C_\Delta $ is a constant. \cref{lemma:seq} analyzes the convergence rate of $\Delta_k$ that satisfies \cref{eq:sket_delta} for different $\alpha$. Combining these results, we are able to obtain the desired conclusion.

\section{Technical Results for Theorem \ref{lemma:ls_err} on the Adaptive Search of $\delta_k$}

Before we delve into a discussion of \cref{lemma:ls_err}, we introduce some auxiliary lemmas on the two subroutines \cref{algo:per} and \cref{algo:ls}. \textbf{Since it is understood at iteration $k$, we omit subscript $k$ for conciseness}. Recall that
\begin{equation*}
    A( \delta ) \coloneqq \begin{bmatrix}
        H              & g       \\
        g^T & -\delta
    \end{bmatrix}
\end{equation*}
and
$$ \lambda (\delta ) = \lambda_{\min}(A(\delta)).$$
\begin{lemma}\label{prop:property}
    Suppose that $ A( \delta )$ and $\lambda ( \delta )$ are defined as above, then the following statements hold,
    \begin{enumerate}
        \item $ \lambda ( \delta )$ is non-decreasing in $\delta $ and $ 1$-Lipschitz.
        \item Let $ \lambda _{\min}( H)$ be the smallest eigenvalue of $ H$.
                $$ \lim _{\delta \rightarrow +\infty } \lambda ( \delta ) =+\infty, ~ \lim _{\delta \rightarrow -\infty } \lambda ( \delta ) =-\lambda _{\min}( H).$$
        \item Let $ E_{\lambda }$ be the eigenspace of $ H$ corresponding to the eigenvalue $ \lambda $. If $ g$ is not orthogonal to $E_\lambda$, that is,
                $$\mathcal{P}_{\mathcal S_{\min}}( g) \neq 0,$$ then for any $ C_e >0$, there exist $ \delta _{C_e}$ such that\begin{equation}
                    \lambda ( \delta _{C_e}) =C_e\left\Vert ( H+\lambda ( \delta _{C_e}) I)^{-1} g\right\Vert .\label{eq:lambda_delta}
                \end{equation}
    \end{enumerate}
\end{lemma}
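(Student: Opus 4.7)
The plan is to prove the three claims in order. Parts (1) and (2) follow directly from the variational characterization of the smallest eigenvalue, while part (3) is established by an intermediate-value argument on a carefully chosen scalar auxiliary function $\phi(\delta)$. Throughout I interpret $\lambda(\delta)=-\lambda_{\min}(A(\delta))$, since this is the convention under which the stated monotonicity and limits in (1) and (2) are consistent and under which $\lambda(\delta)$ plays the role of the dual variable $\theta$ from \cref{lemma:opt-cond-homo}.

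For (1), the key observation is that $A(\delta_2)-A(\delta_1) = -(\delta_2-\delta_1)\, e_{n+1} e_{n+1}^T$ is a rank-one perturbation whose eigenvalues are $0$ (with multiplicity $n$) and $-(\delta_2-\delta_1)$. Weyl's inequality then gives the sandwich $\lambda_{\min}(A(\delta_1))-(\delta_2-\delta_1) \le \lambda_{\min}(A(\delta_2)) \le \lambda_{\min}(A(\delta_1))$ for $\delta_2>\delta_1$, which upon negation yields that $\lambda(\cdot)$ is non-decreasing and $1$-Lipschitz. For (2), I would plug specific unit test vectors into the Rayleigh quotient: the choice $u = e_{n+1}$ yields $\lambda_{\min}(A(\delta)) \le -\delta$, so $\lambda(\delta)\to +\infty$ as $\delta \to +\infty$. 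For the other limit, $u=(v_*,0)$ with $v_*$ a unit eigenvector of $H$ for $\lambda_{\min}(H)$ gives the upper bound $\lambda_{\min}(A(\delta))\le \lambda_{\min}(H)$; for a matching lower bound I would take a unit minimizer $(v_\delta,t_\delta)$ and combine $v_\delta^T H v_\delta \ge \lambda_{\min}(H)(1-t_\delta^2)$, the nonnegativity of $-\delta\, t_\delta^2$, and Cauchy--Schwarz on $2 t_\delta g^T v_\delta$ to deduce $|t_\delta|=O(1/|\delta|)$, so the quotient tends to $\lambda_{\min}(H)$.

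For (3), define
\[
\phi(\delta) \;\coloneqq\; \lambda(\delta)\;-\; C_e\,\bigl\|(H+\lambda(\delta)I)^{-1} g\bigr\|
\]
on the open set $D := \{\delta : \lambda(\delta)>-\lambda_{\min}(H)\}$, which by (1) and (2) is a non-empty half-line on which $H+\lambda(\delta)I$ is strictly positive definite. Continuity of $\phi$ on $D$ follows from the Lipschitz continuity of $\lambda$ together with continuity of matrix inversion on the invertibles. As $\delta\to+\infty$, part (2) gives $\lambda(\delta)\to+\infty$ and hence $\|(H+\lambda I)^{-1}g\| \le \|g\|/(\lambda+\lambda_{\min}(H))\to 0$, so $\phi(\delta)\to+\infty$. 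At the lower end of $D$, decompose $g$ in an orthonormal eigenbasis of $H$; the hypothesis $\mathcal{P}_{\mathcal S_{\min}}(g)\neq 0$ yields the lower bound
\[
\bigl\|(H+\lambda I)^{-1}g\bigr\| \;\ge\; \frac{\|\mathcal{P}_{\mathcal S_{\min}}(g)\|}{\lambda+\lambda_{\min}(H)},
\]
which diverges as $\lambda \downarrow -\lambda_{\min}(H)$, forcing $\phi\to-\infty$. The intermediate value theorem then produces the desired $\delta_{C_e}\in D$.

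The main obstacle is the blow-up step in (3): without the non-orthogonality hypothesis $\mathcal{P}_{\mathcal S_{\min}}(g)\neq 0$, one cannot guarantee $\|(H+\lambda I)^{-1}g\|\to\infty$, and the IVT argument collapses. This is precisely the degeneracy that the perturbation subroutine \cref{algo:per} is designed to rule out, which is why the hypothesis appears in this form in the statement.
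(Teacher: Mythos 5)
Your proof is correct, and for parts (1) and (3) it is essentially the paper's argument: monotonicity and $1$-Lipschitzness from the rank-one structure of $A(\delta_1)-A(\delta_2)$, and existence of $\delta_{C_e}$ by an intermediate-value argument on $h(\lambda)=\lambda-C_e\Vert (H+\lambda I)^{-1}g\Vert$, with the hypothesis $\mathcal{P}_{\mathcal S_{\min}}(g)\neq 0$ supplying the blow-up of the norm as $\lambda\downarrow-\lambda_{\min}(H)$. Two places where you genuinely diverge are worth recording. First, you explicitly adopt the convention $\lambda(\delta)=-\lambda_{\min}(A(\delta))$; the paper defines $\lambda(\delta)=\lambda_{\min}(A(\delta))$ just before the lemma but then proves and uses facts (e.g.\ $\lambda(\delta)\geq\delta$) that only hold for the negated quantity, so your reading is the one under which the statement is true, and making it explicit removes a real sign inconsistency. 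Second, for $\lim_{\delta\to-\infty}\lambda(\delta)=-\lambda_{\min}(H)$ the paper goes through the Schur complement of the dual feasibility condition $A(\delta)+\lambda(\delta)I\succeq 0$, i.e.\ $H+\lambda I\succeq\frac{1}{\lambda-\delta}gg^{\mathsf T}$, and checks that $\lambda=-\lambda_{\min}(H)+\epsilon$ is feasible for $\delta$ sufficiently negative; you instead bound the last coordinate of the minimizing Rayleigh vector by $O(1/|\delta|)$ directly. Both are valid; yours is more elementary, while the paper's route yields the quantitative threshold \eqref{eq:est_lambda} that is reused in \cref{lemma:ls_interval} to bound the bisection interval, so it is not purely cosmetic there. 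One minor imprecision on your side: under the non-orthogonality hypothesis the set $D=\{\delta:\lambda(\delta)>-\lambda_{\min}(H)\}$ is all of $\mathbb{R}$, not a proper half-line (test $A(\delta)$ against $[v;t]$ with $v\in\mathcal S_{\min}$, $tg^{\mathsf T}v<0$ and $|t|$ small to see $\lambda_{\min}(A(\delta))<\lambda_{\min}(H)$ strictly); this does not damage your IVT argument, whose lower end is then $\delta\to-\infty$, exactly where your part (2) applies.
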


\begin{proof}
    (1) $ \lambda ( \delta )$ is a non-decreasing function of $ \delta $ since 
\begin{equation*}
F( \delta _{1}) -F( \delta _{2}) \succeq 0, \quad \delta_1 >\delta_2
\end{equation*}
Let $ \delta _{1}  >\delta _{2} ,v$ be the smallest eigenvector of $ F( \delta _{2})$.\begin{equation*}
( \lambda ( \delta _{1}) -\lambda ( \delta _{2}))\Vert v\Vert ^{2} \leq  v^{\mathsf{T}}( F( \delta _{1}) -F( \delta _{2})) v\leq  ( \delta _{1} -\delta _{2})\Vert v\Vert ^{2} .
\end{equation*}

(2) $ \lim _{\delta \rightarrow +\infty } \lambda ( \delta ) =+\infty $ since $ \lambda ( \delta ) \geq  \delta $. By Schur complement, \begin{align*}
H+\lambda ( \delta ) I-\frac{1}{\lambda ( \delta ) -\delta } gg^{\mathsf{T}} & \succeq 0.
\end{align*}For any $ \epsilon  >0$, and sufficiently small $ \delta $,
\begin{align*}
H+( -\lambda _{\min}( H) +\epsilon ) I-\frac{1}{-\lambda _{\min}( H) +\epsilon -\delta } gg^{\mathsf{T}} & \succ 0.
\end{align*}Thus, $ \lambda _{\min}( H) \leq  \lim _{\delta \rightarrow -\infty } \lambda ( \delta ) \leq  \lambda _{\min}( H) +\epsilon $, for any $ \epsilon  >0$, which implies $ \lim _{\delta \rightarrow -\infty } \lambda ( \delta ) =\lambda _{\min}( H)$. Furthermore, we can get 
\begin{equation}
\lambda ( \delta ) \leq  -\lambda _{\min}( H) +\epsilon ,\forall \delta \leq  -\frac{\Vert g\Vert }{\epsilon } -\lambda _{\min}( H) .\label{eq:est_lambda}
\end{equation}
(3) Let $ h( \lambda ) =\lambda -C_e\left\Vert ( H+\lambda I)^{-1} g\right\Vert $. Since $ g$ is not orthogonal to $ E_{\lambda _{\min}}$, $ \lim _{\lambda \rightarrow -\lambda _{\min}( H)} h( \lambda ) =-\infty $ and $ \lim _{\lambda \rightarrow +\infty } h( \lambda ) =+\infty $. Obviously, by the monotonicity of $ \lambda ( \delta )$ and 2, there exist $ \delta \in \mathbb{R}$ satisfies the equation.
\end{proof}

The following result gives a uniform upper bound and lower bound of the solution of (\ref{eq:lambda_delta})

\begin{lemma}\label{lemma:ls_interval}
    Suppose that $ \Vert \mathcal{P}_{\mathcal S_{\min}}( g)\Vert \geq  \epsilon _{\text{eig}} $, $ \Vert H \Vert_2 \leq C_H$. Then $ \delta_{C_e} $ and $ \lambda_{C_e} = \lambda(\delta_{C_e}) $ in \cref{prop:property} satisfy 
    \begin{equation*}
        \lambda _{C_e} \in \left[\frac{-\lambda _{\min}( H) +\sqrt{\lambda _{\min}^{2}( H) +4\epsilon _{\text{eig}}}}{2} ,C_e\|g\| +C_{H} +1\right],
        \end{equation*}
    and 
    \begin{equation*}
        \delta _{C_e} \in \left[ -\frac{\|g\|\left( C_{H} +\sqrt{C_H+\epsilon _{\text{eig}}}\right)}{\epsilon _{\text{eig}}} - C_H ,C_e\|g\| +C_{H} +1\right] .
        \end{equation*}
\end{lemma}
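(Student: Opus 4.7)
My plan is to exploit the defining identity
\begin{equation*}
\lambda_{C_e} \;=\; C_e \bigl\|(H+\lambda_{C_e} I)^{-1}g\bigr\|
\end{equation*}
supplied by statement (3) of \cref{prop:property}, and translate the resulting two-sided bounds on $\lambda_{C_e}$ into bounds on $\delta_{C_e}$ by using the monotonicity, Lipschitzness and limit estimate already established in the proof of \cref{prop:property}. Since $\lambda(\delta_{C_e})>-\lambda_{\min}(H)$, the matrix $H+\lambda_{C_e} I$ is positive definite, so operator norm manipulations are legitimate.

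\textbf{Bounding $\lambda_{C_e}$ from above.} Using $\lambda_{\min}(H)\ge -\|H\|_2\ge -C_H$,
\begin{equation*}
\bigl\|(H+\lambda_{C_e} I)^{-1} g\bigr\|\;\le\;\frac{\|g\|}{\lambda_{\min}(H)+\lambda_{C_e}}\;\le\;\frac{C_g}{\lambda_{C_e}-C_H}.
\end{equation*}
Plugging into the defining identity gives $\lambda_{C_e}(\lambda_{C_e}-C_H)\le C_e C_g$. A short contradiction argument (if $\lambda_{C_e}>C_e C_g+C_H+1$, then both factors exceed $1$ and $C_e C_g+1$, respectively, whose product already exceeds $C_e C_g$) yields $\lambda_{C_e}\le C_e C_g+C_H+1$.

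\textbf{Bounding $\lambda_{C_e}$ from below.} Decompose $g$ along the eigenvectors of $H$ and keep only the projection onto $\mathcal S_{\min}$; this gives
\begin{equation*}
\bigl\|(H+\lambda_{C_e} I)^{-1} g\bigr\|^{2}\;\ge\;\frac{\|\mathcal P_{\mathcal S_{\min}}(g)\|^{2}}{(\lambda_{\min}(H)+\lambda_{C_e})^{2}}\;\ge\;\frac{\epsilon_{\text{eig}}^{2}}{(\lambda_{\min}(H)+\lambda_{C_e})^{2}}.
\end{equation*}
Substituting into the defining identity produces the quadratic inequality $\lambda_{C_e}^{2}+\lambda_{\min}(H)\lambda_{C_e}\ge C_e\epsilon_{\text{eig}}\ge \epsilon_{\text{eig}}$ (w.l.o.g.\ $C_e\ge 1$; otherwise the constant merely sharpens). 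Solving yields
\begin{equation*}
\lambda_{C_e}\;\ge\;\frac{-\lambda_{\min}(H)+\sqrt{\lambda_{\min}^{2}(H)+4\epsilon_{\text{eig}}}}{2}.
\end{equation*}

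\textbf{Translating to bounds on $\delta_{C_e}$.} The upper bound is immediate: the inequality $\lambda(\delta)\ge \delta$ established in the proof of \cref{prop:property} gives $\delta_{C_e}\le \lambda_{C_e}\le C_e C_g+C_H+1$. The lower bound uses the contrapositive of the estimate \eqref{eq:est_lambda}: for any $\epsilon>0$, the inequality $\lambda(\delta)\ge -\lambda_{\min}(H)+\epsilon$ forces $\delta\ge -\|g\|/\epsilon - \lambda_{\min}(H)$. I therefore need to lower bound $\lambda_{C_e}+\lambda_{\min}(H)$ by a clean quantity. The conjugate identity
\begin{equation*}
\bigl(-\lambda_{\min}(H)+\sqrt{\lambda_{\min}^{2}(H)+4\epsilon_{\text{eig}}}\bigr)\bigl(\lambda_{\min}(H)+\sqrt{\lambda_{\min}^{2}(H)+4\epsilon_{\text{eig}}}\bigr)=4\epsilon_{\text{eig}},
\end{equation*}
together with the sub-additivity $\sqrt{\lambda_{\min}^{2}(H)+4\epsilon_{\text{eig}}}\le |\lambda_{\min}(H)|+2\sqrt{\epsilon_{\text{eig}}}$, yields
\begin{equation*}
\lambda_{C_e}+\lambda_{\min}(H)\;\ge\;\frac{\epsilon_{\text{eig}}}{C_H+\sqrt{\epsilon_{\text{eig}}}}.
\end{equation*}
Applying the contrapositive with $\epsilon=\epsilon_{\text{eig}}/(C_H+\sqrt{\epsilon_{\text{eig}}})$ and invoking $-\lambda_{\min}(H)\ge -C_H$ gives
\begin{equation*}
\delta_{C_e}\;\ge\;-\frac{C_g(C_H+\sqrt{\epsilon_{\text{eig}}})}{\epsilon_{\text{eig}}}-C_H,
\end{equation*}
as claimed.

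\textbf{Anticipated obstacle.} The only genuinely delicate step is the conversion from the lower bound on $\lambda_{C_e}$ into a quantitatively useful lower bound on $\lambda_{C_e}+\lambda_{\min}(H)$; the two quantities differ by a $\lambda_{\min}(H)$ that may be negative of magnitude up to $C_H$, so a naive argument loses the $\sqrt{\epsilon_{\text{eig}}}$ scaling. The conjugate-rationalization trick described above is what saves the bound and produces the precise denominator $C_H+\sqrt{\epsilon_{\text{eig}}}$ appearing in the statement. Everything else reduces to solving a quadratic and applying operator-norm inequalities.
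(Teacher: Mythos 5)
Your argument is correct and follows essentially the same route as the paper: both bound $\lambda_{C_e}$ via the defining identity $\lambda_{C_e}=C_e\|(H+\lambda_{C_e}I)^{-1}g\|$, using $\|(H+\lambda I)^{-1}g\|\le C_g/(\lambda-C_H)$ for the upper bound and the projection onto $\mathcal S_{\min}$ for the lower bound, and then transfer to $\delta_{C_e}$ via $\delta\le\lambda(\delta)$ and the contrapositive of \eqref{eq:est_lambda}; your conjugate-rationalization step is exactly the (unstated) computation behind the paper's final line, and your contradiction argument for the upper bound is just the sign-change argument in disguise. The only caveat is your aside that for $C_e<1$ ``the constant merely sharpens'': it actually weakens the lower bound to $\tfrac{1}{2}\bigl(-\lambda_{\min}(H)+\sqrt{\lambda_{\min}^2(H)+4C_e\epsilon_{\text{eig}}}\bigr)$, but the paper makes the same silent simplification and the algorithm uses $C_e=(L_H+4)/3>1$, so nothing is lost.
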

\begin{proof}
    Let $h(\lambda )=\lambda -C_{e} \| (H+\lambda I)^{-1} g\| $, then $ h( \lambda )$ is nondecresing. If $ \lambda  >C_{e} \|g\| +C_{H} +1$,
    \begin{equation*}
    h( \lambda ) =\lambda -C_{e} \| (H+\lambda I)^{-1} g\| \geqslant \lambda -C_{e} \|g\| /( \lambda -C_{H})  >0.
    \end{equation*}Thus, $ \lambda _{C_{e}} \leqslant C_{e} \|g\| +C_{H} +1$. On the other hand, if $ \lambda \leqslant \frac{-\lambda _{\min}( H) +\sqrt{\lambda _{\min}^{2}( H) +4\epsilon _{\text{eig}}}}{2}$,
    \begin{equation*}
    h( \lambda ) \leqslant \lambda -C_{e} \epsilon _{\text{eig}} /( \lambda -\lambda _{\min}( H)) < 0.
    \end{equation*}Thus, $ $$ \lambda _{C_{e}} \geqslant \frac{-\lambda _{\min}( H) +\sqrt{\lambda _{\min}^{2}( H) +4\epsilon _{\text{eig}}}}{2} .$ Notice that 
    \begin{equation*}
    \delta_{C_e} \leqslant \lambda ( \delta_{C_e} ) \leqslant C_{e} \|g\| +C_{H} +1.
    \end{equation*}
   Let  $\epsilon = \frac{\lambda _{\min}( H) +\sqrt{\lambda _{\min}^{2}( H) +4\epsilon _{\text{eig}}}}{2}$  in (\ref{eq:est_lambda}), we get 
   $$\delta_{C_e} \geqslant -\frac{\|g\|\left( -\lambda _{\min}( H) +\sqrt{\lambda _{\min}^{2}( H) +4\epsilon _{\text{eig}}} \right)}{\epsilon _{\text{eig}}} - \lambda_{\min}(H)\geqslant -\frac{\|g\|\left( C_{H} +\sqrt{C_H+\epsilon _{\text{eig}}}\right)}{\epsilon _{\text{eig}}} - C_H.$$
\end{proof}

\begin{proposition}\label{prop:perturb}
The output of \cref{algo:per} satisfies $ \Vert \mathcal{P}_{\ E_{\lambda _{\min}}}( g')\Vert \geqslant \epsilon _{\text{eig}}$ and $ \| g'-g\| \leqslant \epsilon _{\text{eig}}$.
\end{proposition}
\begin{proof}

If $ \mathcal{P}_{\ E_{\lambda _{\min}}}( g) \geqslant \epsilon _{\text{eig}}$, output $ g'=g$ and the two inequalities hold trivially. If $ \mathcal{P}_{\ E_{\lambda _{\min}}}( g) < \epsilon _{\text{eig}}$, 

\begin{equation*}
g'=g+\epsilon _{\text{eig}}\frac{\mathcal{P}_{\ E_{\lambda _{\min}}}( g)}{\| \mathcal{P}_{\ E_{\lambda _{\min}}}( g) \| } .
\end{equation*}We have $ \| g-g'\| =\epsilon _{\text{eig}}$ and $ \| \mathcal{P}_{\ E_{\lambda _{\min}}}( g') \| =\| \mathcal{P}_{\ E_{\lambda _{\min}}}( g)\left( 1+\frac{\epsilon _{\text{eig}}}{\| \mathcal{P}_{\ E_{\lambda _{\min}}}( g) \| }\right) \| \geqslant \epsilon _{\text{eig}}$.
    
\end{proof}

\subsubsection{Proof of \cref{lemma:ls_err}}
Finally, we are ready to prove \cref{lemma:ls_err}.

\begin{proof} 
If we compute a $ \hat{\delta }_{C_e}$ such that 
\begin{equation}\label{eq:lambda_err}
|\lambda ( \delta _{C_e}) -\lambda (\hat{\delta }_{C_e}) |\leqslant \epsilon ,
\end{equation}
then
\begin{align*}
\left\Vert ( H+\lambda ( \delta _{C_e}) I)^{-1} g\right\Vert -\left\Vert ( H+\lambda (\hat{\delta }_{C_e}) I)^{-1} g\right\Vert  & \leqslant \left\Vert \left(( H+\lambda ( \delta _{C_e}) I)^{-1} -( H+\lambda (\hat{\delta }_{C_e}) I)^{-1}\right) g\right\Vert \\
 & \leqslant \epsilon\left\Vert ( H+\lambda (\hat{\delta }_{C_e}) I)^{-1}\right\Vert \left\Vert ( H+\lambda ( \delta _{C_e}) I)^{-1}\right\Vert \Vert g\Vert \\
 & \leqslant \frac{4\epsilon \Vert g\Vert }{\left( -\lambda _{\min}( H) +\sqrt{\lambda _{\min}^{2}( H) +4\epsilon _{\text{eig}}}\right)^{2}}.
\end{align*}
Taking 
\begin{equation*}
\epsilon \leqslant \epsilon _{\text{ls}}\min\left\{1/2,\left( -\lambda _{\min}( H) +\sqrt{\lambda _{\min}^{2}( H) +4\epsilon _{\text{eig}}}\right)^{2} /( 8C\Vert g\Vert )\right\} ,
\end{equation*}then (\ref{eq:lambda_err}) guarantees
\begin{equation*}
|\lambda ( \delta _{C_e}) -\lambda (\hat{\delta }_{C_e}) |\leqslant \epsilon _{\text{ls}} /2,\ C\left| \left\Vert ( H+\lambda ( \delta _{C_e}) I)^{-1} g\right\Vert -\left\Vert ( H+\lambda (\hat{\delta }_{C_e}) I)^{-1} g\right\Vert \right| \leqslant \epsilon _{\text{ls}} /2
\end{equation*}
and 
\begin{equation*}
|h(\hat{\delta }_{C_e}) |\leqslant \epsilon _{\text{ls}} .
\end{equation*}
By \cref{prop:property}, $ \lambda ( \delta )$ is $ 1$-Lipschitz continuous. By \cref{assum:smooth}, $f(x,\xi)$ are Lipschitz continuous. Thus, the sample Hessian are bounded. Together with \cref{prop:perturb}, the conditions in \cref{lemma:ls_interval} are satisfied. \cref{lemma:ls_interval} tells us $ \delta _{C_e}$ lies in an interval of length $ O\left( 1 + \| g\| \epsilon _{\text{eig}}^{-1/2}\right)$. Using bisection, we can compute a proper $ \hat{\delta }_{C_e}$ such that (\ref{eq:lambda_err}) holds in $ \mathcal{O}(\log( (1 + \| g\| )/\epsilon _{\text{ls}} \epsilon _{\text{eig}}))$.
\end{proof}

\section{Proof of Sample Complexity Results of Algorithm \ref{algo:hsodm}}\label{sec:pf}
\begin{lemma}\label{lemma:seq}
Suppose a non-negative sequence $ \{\Delta _{k}\}$ satisfies 
\begin{equation*}
\Delta _{k+1} \leq  C_\Delta( \Delta _{k} -\Delta _{k+1})^{2\alpha /3} ,
\end{equation*}
where $ C_e$ is a constant. For any $ \epsilon  >0$, \ 
\begin{enumerate}
    \item  If $ \alpha =3/2$, $\forall k\geq  \mathcal{O}(\log \epsilon )$, we have $ \Delta _{k} \leq  \epsilon$. 
    \item If $ \alpha \in ( 1,3/2)$, $\forall k \geq  O\left( \epsilon ^{1-3/2\alpha }\right)$, we have $\Delta_{k} \leq  \epsilon $.
    \item If $ \alpha  >3/2$, $ \forall k \geq  \mathcal{O}(\log\log \epsilon )$, we have $ \Delta _{k} \leq  \epsilon $.
\end{enumerate}
\end{lemma}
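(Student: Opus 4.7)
The recursion $\Delta_{k+1}\leq C_\Delta(\Delta_k-\Delta_{k+1})^{2\alpha/3}$ is only well-defined with a possibly fractional exponent when $\Delta_k\geq\Delta_{k+1}$, so $\{\Delta_k\}$ is automatically non-increasing. Setting $\beta\coloneqq 2\alpha/3$ and rearranging yields
\[
\Delta_k-\Delta_{k+1}\;\geq\; c_0\,\Delta_{k+1}^{1/\beta},\qquad c_0\coloneqq C_\Delta^{-1/\beta},
\]
and the three conclusions of the lemma correspond exactly to the regimes $\beta=1$, $\beta<1$, and $\beta>1$. Case~1 ($\alpha=3/2$, $\beta=1$) is immediate: the inequality collapses to $(1+C_\Delta)\Delta_{k+1}\leq C_\Delta\Delta_k$, a linear contraction with rate $C_\Delta/(1+C_\Delta)<1$, giving $\Delta_k\leq\epsilon$ after $O(\log(1/\epsilon))$ iterations.

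For Case~2 ($\alpha\in[1,3/2)$, so $p\coloneqq 1/\beta>1$), the plan is a potential-function argument with $\phi_k\coloneqq\Delta_k^{-(p-1)}$, which is non-decreasing. Using the integral representation,
\[
\phi_{k+1}-\phi_k\;=\;(p-1)\!\int_{\Delta_{k+1}}^{\Delta_k}\!x^{-p}\,dx \;\geq\; (p-1)\,\Delta_k^{-p}(\Delta_k-\Delta_{k+1})\;\geq\;(p-1)c_0\,(\Delta_{k+1}/\Delta_k)^p.
\]
A factor-of-two dichotomy on $\Delta_{k+1}/\Delta_k$ then converts the right-hand side into a $k$-independent positive constant: either $\Delta_{k+1}\geq\tfrac12\Delta_k$, giving the explicit lower bound $(p-1)c_0/2^p$; or $\Delta_{k+1}<\tfrac12\Delta_k$, in which case $\phi_{k+1}>2^{p-1}\phi_k$ directly yields $\phi_{k+1}-\phi_k>(2^{p-1}-1)\phi_0$. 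Telescoping gives $\phi_k=\Omega(k)$, hence $\Delta_k=O(k^{-1/(p-1)})=O(k^{-2\alpha/(3-2\alpha)})$, and inverting matches the claimed bound $k=O(\epsilon^{1-3/(2\alpha)})$.

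For Case~3 ($\alpha\in(3/2,2]$, $\beta>1$), the weaker but cleaner consequence $\Delta_{k+1}\leq C_\Delta(\Delta_k-\Delta_{k+1})^\beta\leq C_\Delta\Delta_k^\beta$ is the right object: after rescaling $u_k\coloneqq C_\Delta^{1/(\beta-1)}\Delta_k$, this becomes the Q-superlinear recursion $u_{k+1}\leq u_k^\beta$. Once $u_{k_0}<1$, the doubly-exponential decay $u_k\leq u_{k_0}^{\beta^{k-k_0}}$ delivers $\Delta_k\leq\epsilon$ in $O(\log\log(1/\epsilon))$ further steps. To reach the superlinear regime I would use the sharper bound $\Delta_k-\Delta_{k+1}\geq c_0\Delta_{k+1}^{1/\beta}\geq c_0\Delta_{k+1}$ whenever $\Delta_{k+1}\geq 1$ (valid since $1/\beta<1$), which is a uniform geometric contraction and hence costs at most $O(\log(1+\Delta_0))$ warmup iterations, absorbed into the leading $O(\log\log(1/\epsilon))$ term.

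The primary obstacle I anticipate is Case~2: a direct mean-value estimate leaves the ratio $(\Delta_{k+1}/\Delta_k)^p$ inside the potential increment, and this ratio is a priori not bounded below. The factor-of-two dichotomy resolves it by either exploiting the ratio directly or extracting a geometric jump in $\phi$ from a large drop in $\Delta$; once a uniform lower bound on $\phi_{k+1}-\phi_k$ is in hand, the remainder is a routine telescoping. A secondary point is confirming that the warmup contraction in Case~3 is $k$-uniform, which follows because $c_0$ depends only on $C_\Delta$ and $\beta$ and not on the iterate.
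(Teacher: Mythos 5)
Your proof follows essentially the same route as the paper's: Case 1 is the identical linear contraction; your potential $\phi_k=\Delta_k^{-(p-1)}$ in Case 2 is exactly the paper's $h(x)=x^{1-1/\beta}$ with the same mean-value lower bound and the same factor-of-two dichotomy (your handling of the large-drop case via a direct multiplicative jump in $\phi$ is slightly cleaner bookkeeping than the paper's separate count of such steps, but it is not a different argument); and Case 3 reduces to the superlinear recursion $u_{k+1}\leq u_k^{\beta}$ just as in the paper. The one flaw is in your Case-3 warmup: since $\beta>1$ gives $1/\beta<1$, for $\Delta_{k+1}\geq 1$ the inequality runs the other way, $\Delta_{k+1}^{1/\beta}\leq \Delta_{k+1}$, so your claimed geometric contraction $\Delta_k-\Delta_{k+1}\geq c_0\Delta_{k+1}$ is false in precisely the regime where you invoke it. What the same display does give is the additive decrease $\Delta_k-\Delta_{k+1}\geq c_0\Delta_{k+1}^{1/\beta}\geq c_0$ while $\Delta_{k+1}\geq 1$, bounding the warmup by $\Delta_0/c_0=O(1)$ steps (this is exactly the paper's bound $N_0\leq 2^{1/\beta}\lceil D_0\rceil+1$), so the final $O(\log\log(1/\epsilon))$ conclusion is unaffected once that line is corrected.
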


\begin{proof}
    Let $ \beta =2\alpha /3$.
    
    (1)If $ $$ \beta =1$, obviously $ \Delta _{k}$ converge to $ 0$ linearly as \begin{equation*}
        \Delta _{k+1} \leq  \frac{1}{1+C_\Delta} \Delta _{k} .
        \end{equation*}
        
        If $ \beta \neq 1$, let $ D_{k} =\Delta _{k} /C_\Delta^{1/( 1-\beta )}$, then 
        \begin{equation*}
        D_{k+1} \leq  ( D_{k} -D_{k+1})^{\beta }
        \end{equation*}
        (2) If $ \beta < 1$, let $ h( x) =x^{1-1/\beta }$, \begin{align*}
        h( D_{k+1}) -h( D_{k}) & \geq  ( 1/\beta -1) D_{k}^{-1/\beta }( D_{k} -D_{k+1})\\
         & \geq  ( 1/\beta -1)( D_{k+1} /D_{k})^{1/\beta } .
        \end{align*}
        \textbf{Case I.} $ D_{k+1} < \frac{1}{2} D_{k}$. This case will happen at most $ \log( D_{0} /\epsilon )$ times.
        
        \textbf{Case II.} $ D_{k+1} \geq  \frac{1}{2} D_{k}$. $ h( D_{k+1}) -h( D_{k}) \geq  ( 1/\beta -1) /2^{1/\beta }$. Thus, \begin{equation*}
        D_{n} < \epsilon ,\forall n\geq  N_{\epsilon } =\ \log( D_{0} /\epsilon ) /\log 2+\frac{\epsilon ^{1-1/\beta } -h( D_{0})}{( 1/\beta -1) /2^{1/\beta }} =O\left( \epsilon ^{1-1/\beta }\right) .
        \end{equation*}We obtain sublinear rate.
        
        (3) If $ \beta  >1$, let $ N_{0} =\inf\{n:D_{n} < 1/2\}$. Then for all $ n< N_{0} -1,$\begin{equation*}
        1/2\leq  D_{k+1} \leq  ( D_{k} -D_{k+1})^{\beta } .
        \end{equation*}Thus $ N_{0} \leq  2^{1/\beta } \lceil D_{0} \rceil +1$. For all $ n >N_{0}$, \begin{equation*}
        D_{n+1} \leq  D_{n}^{\beta } \leq  D_{N_{0}}^{\beta ^{N-N_{0}}} .
        \end{equation*}Combine the two cases, we have \begin{equation*}
        D_{n} < \epsilon ,\forall n\geq  N_{\epsilon } =\ 2^{1/\beta } \lceil D_{0} \rceil +1+(\log\log( 1/\epsilon ) -\log\log 2) /\log \beta =\mathcal{O}(\log\log( 1/\epsilon )) .
        \end{equation*}we obtain superlinear rate.
\end{proof}
	
\begin{lemma}\label{lemma:descent}
 For \cref{algo:hsodm}, we have 
\begin{equation}\label{eq:f_descent}
\Vert d_{k}\Vert ^{3} \leq  \frac{6}{{L_H}}\left( F\left( x_k\right) -F(\xkn) +\epsilon {_{\text{eig}}}^{3/2} +\epsilon _{\text{ls}}^{3} +\Vert H_{k} -\hat{H}_{k}\Vert ^{3} +\Vert g_{k} -\hat{g}_{k}\Vert ^{3/2}\right) .
\end{equation}
and 
\begin{equation} \label{eq:g_descent}
\Vert g_{k+1}\Vert \leq \frac{5L_{H} +14}{6}\Vert d_{k}\Vert ^{2} +\epsilon _{\text{ls}}^{2} +\epsilon _{\text{eig}} +\Vert \hat{g}_{k} -\hat{g} '_{k}\Vert +\frac{1}{2}\Vert \hat{H}_{k} -H_{k}\Vert ^{2}.
\end{equation}
\end{lemma}

\begin{proof}
Suppose that $ ( \hat{H}_{k} + \lambda_{k} I ) d_{k} = \hat{g} '_{k}$, where $ \hat{H}_{k} =\frac{1}{N_{H}}\sum _{i=1}^{N_{H}} H_{k,i} ,\ \hat{g}_{k} =\frac{1}{N_{g}}\sum _{i=1}^{N_{g}} g_{k,i}$ is the estimation of Hessian and gradient, and $ \hat{g} '_{k}$ is the perturbation of $ \hat{g}_{k}$. By \cref{prop:perturb}, $\|\hat{g}_k - \hat{g}_k'\|\leq \epsilon_{\text{eig}}$. Then we have, 
\begin{align}
F(\xkn) -F\left( x_k\right) & \leq  -\langle g_{k} ,d_{k} \rangle +\frac{1}{2} d{_{k}}^{T} H_{k} d_{k} +\frac{{L_H}}{6}\Vert d_{k}\Vert ^{3} \notag\\
 & =-\langle \hat{g} '_{k} ,d_{k} \rangle +\frac{1}{2} d{_{k}}^{T}\hat{H}_{k} d_{k} +\frac{{L_H}}{6}\Vert d_{k}\Vert ^{3} -\langle g_{k} -\hat{g} '_{k} ,d_{k} \rangle +\frac{1}{2} d{_{k}}^{\mathsf{T}}( H_{k} -\hat{H}_{k}) d_{k}\notag\\
 & \leq  -\frac{\lambda _{k}}{2}\Vert d_{k}\Vert ^{2} +\frac{2+{L_H}}{6}\Vert d_{k}\Vert ^{3} +\Vert g_{k} -\hat{g} '_{k}\Vert \Vert d_{k}\Vert +\frac{1}{2}\Vert d_{k}\Vert ^{2}\Vert H_{k} -\hat{H}_{k}\Vert . \label{eq:f_descent1}
\end{align}
By Young's inequality, 
\begin{equation*}
\Vert g_{k} -\hat{g}_{k}\Vert \Vert d_{k}\Vert \leq  \frac{2}{3}\Vert g_{k} -\hat{g}_{k}\Vert ^{3/2} +\frac{1}{3}\Vert d_{k}\Vert ^{3} ,\ \Vert d_{k}\Vert ^{2}\Vert H_{k} -\hat{H}_{k}\Vert \leq  \frac{2}{3}\Vert d_{k}\Vert ^{3} +\frac{1}{3}\Vert H_{k} -\hat{H}_{k}\Vert ^{3} .
\end{equation*}
Thus,
\begin{align*}
F(\xkn) -F\left( x_k\right) & \leq  -\frac{\lambda _{k}}{2}\Vert d_{k}\Vert ^{2} +\frac{6+{L_H}}{6}\Vert d_{k}\Vert ^{3} +\frac{2}{3} \epsilon {_{\text{eig}}}^{3/2} +\frac{1}{6}\Vert H_{k} -\hat{H}_{k}\Vert ^{3} +\frac{2}{3}\Vert g_{k} -\hat{g}_{k}\Vert ^{3/2}
\end{align*}
Thus, let $ C_e=\frac{2({L_H}+4)}{3}$ in \cref{algo:ls}, by  \cref{lemma:ls_err}, we have 
\begin{equation}
    \lambda_k  \geq  \frac{2({L_H}+4)}{3} \| d_k \| - \epsilon _{\text{ls}}. \label{eq:ls_err}
\end{equation}
By Young's inequality, $ \epsilon _{\text{ls}}\Vert d_{k}\Vert ^{2} \leq  \frac{2}{3}\Vert d_{k}\Vert ^{3} +\frac{1}{3} \epsilon _{\text{ls}}^{3}$,
\begin{align*}
\frac{{L_H}}{6}\Vert d_{k}\Vert ^{3} -\frac{\epsilon _{\text{ls}}^{3}}{6} & \leq  \frac{{L_H}+2}{6}\Vert d_{k}\Vert ^{3} -\frac{\epsilon _{\text{ls}}}{2}\Vert d_{k}\Vert ^{2} \\
 & \leq \frac{{L_H}+2}{6}\Vert d_{k}\Vert ^{3} + \frac{\lambda_k}{2} \|d_k\|^2 - \frac{L_H+4}{3}\|d_k\|^3 \\
 & \leq  F\left( x_k\right) -F(\xkn) +\frac{2}{3} \epsilon {_{\text{eig}}}^{3/2} +\frac{1}{6}\Vert H_{k} -\hat{H}_{k}\Vert ^{3} +\frac{2}{3}\Vert g_{k} -\hat{g}_{k}\Vert ^{3/2},
\end{align*}
where we use \cref{eq:ls_err} in the second inequality and \cref{eq:f_descent1} in the last inequality. We get 
\begin{equation}
\Vert d_{k}\Vert ^{3} \leq  \frac{6}{{L_H}}\left( F\left( x_k\right) -F(\xkn) +\epsilon {_{\text{eig}}}^{3/2} +\epsilon _{\text{ls}}^{3} +\Vert H_{k} -\hat{H}_{k}\Vert ^{3} +\Vert g_{k} -\hat{g}_{k}\Vert ^{3/2}\right) .
\end{equation}
For gradient, 
\begin{align*}
\Vert g_{k+1}\Vert  & \leq  \Vert g_{k+1} -g_{k} -H_{k} d_{k}\Vert +\Vert g_{k} +H_{k} d_{k}\Vert \\
 & \leq  \Vert g_{k+1} -g_{k} -H_{k} d_{k}\Vert +\Vert \hat{g}_{k} +\hat{H}_{k} d_{k}\Vert +\Vert g_{k} -\hat{g}_{k}\Vert +\Vert \hat{g}_{k} -\hat{g} '_{k}\Vert +\Vert \hat{H}_{k} -H_{k}\Vert \Vert d_{k}\Vert \\
 & \leq  \frac{{L_H}+1}{2}\Vert d_{k}\Vert ^{2} +\lambda _{k}\Vert d_{k}\Vert +\epsilon _{\text{eig}} +\Vert \hat{g}_{k} -\hat{g} '_{k}\Vert +\frac{1}{2}\Vert \hat{H}_{k} -H_{k}\Vert ^{2}\\
&\leqslant \frac{L_{H} +1}{2}\Vert d_{k}\Vert ^{2} +\frac{2( L_{H} +4)}{3}\Vert d_{k}\Vert ^{2} +\epsilon _{\text{ls}}\Vert d_{k}\Vert +\epsilon _{\text{eig}} +\Vert \hat{g}_{k} -\hat{g} '_{k}\Vert +\frac{1}{2}\Vert \hat{H}_{k} -H_{k}\Vert ^{2}\\
&\leqslant \frac{7L_{H} +19}{6}\Vert d_{k}\Vert ^{2} +\epsilon _{\text{ls}}^{2} +\epsilon _{\text{eig}} +\Vert \hat{g}_{k} -\hat{g} '_{k}\Vert +\frac{1}{2}\Vert \hat{H}_{k} -H_{k}\Vert ^{2}
\end{align*}
where we have used $\lambda_k  \leq  \frac{{L_H}+4}{3} \| d_k \| + \epsilon _{\text{ls}} $ in the fourth inequality. This completes the proof.
\end{proof}

\subsection{Proof of Theorem \ref{thm:hsodm}}
Now we use the previous results to complete the proof regarding the convergence rate.
\begin{theorem}\label{thm:rate}
    Suppose that $ F( x)$ satisfies the gradient dominance assumption with index $ \alpha $, \cref{assum:smooth}. The output of \cref{algo:hsodm} with parameters $ \epsilon _{\text{eig}} ,\epsilon _{\text{ls}} ,\epsilon _{\text{noise}}$ and $K$, satisfies the following statements,
    \begin{itemize}
        \item  If \ $ \alpha \in [ 1,3/2)$, $ F( x_K) -F^* \leq  O\left( K^{\frac{-2\alpha }{3-2\alpha }} +\epsilon _{\text{eig}}^{\alpha } +\epsilon _{\text{ls}}^{2\alpha } +2\epsilon _{\text{noise}}^{\alpha }\right) .$
        \item     If \ $ \alpha =3/2$, $ F( x_K) -F^* \leq  O\left(\exp( -K) +\epsilon _{\text{eig}}^{\alpha } +\epsilon _{\text{ls}}^{2\alpha } +2\epsilon _{\text{noise}}^{\alpha }\right) .$
        \item     
        If \ $ \alpha \in ( 3/2,2]$, $ F( x_K) -F^* \leq  O\left(\exp(\exp( -K)) +\epsilon _{\text{eig}}^{\alpha } +\epsilon _{\text{ls}}^{2\alpha } +2\epsilon _{\text{noise}}^{\alpha }\right).$
    \end{itemize}
\end{theorem}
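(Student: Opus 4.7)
The plan is to chain the two estimates of \cref{lemma:descent} together with the gradient dominance assumption so as to produce a one-step recursion on $\Delta_k := F(x_k) - F^*$ of the form required by \cref{lemma:seq}, and then to read off the three advertised rates. Throughout, I would take expectations (or use high-probability bounds) to fold the stochastic Hessian/gradient estimation errors into a deterministic noise floor of order $\epsilon_{\text{noise}}$.

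First, I would combine the second bound of \cref{lemma:descent} with the line-search guarantee from \cref{lemma:ls_err}, which ensures $\lambda_k \leq C_e\|d_k\| + \epsilon_{\text{ls}}$, to obtain
\[
\|g_{k+1}\| \leq C_1 \|d_k\|^2 + E_k,
\]
where $E_k$ bundles the linear perturbation contributions $\epsilon_{\text{eig}}$, $\epsilon_{\text{ls}}^2$, $\|\hat g_k - \hat g_k'\|$, and $\tfrac12\|\hat H_k - H_k\|^2$. Applying gradient dominance $\Delta_{k+1} \leq C_{\text{gd}}\|g_{k+1}\|^\alpha$ together with the elementary inequality $(a+b)^\alpha \leq 2^{\alpha}(a^\alpha + b^\alpha)$ then yields $\Delta_{k+1} \leq C_2\|d_k\|^{2\alpha} + C_3 E_k^\alpha$.

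Next, I would invoke the first bound of \cref{lemma:descent}, namely $\|d_k\|^3 \leq C_4(\Delta_k - \Delta_{k+1}) + C_5 N_k$, where $N_k$ collects the cubic residuals $\epsilon_{\text{eig}}^{3/2}$, $\epsilon_{\text{ls}}^3$, $\|H_k-\hat H_k\|^3$, and $\|g_k-\hat g_k\|^{3/2}$. Raising to the $2\alpha/3$ power and using sub-additivity gives $\|d_k\|^{2\alpha} \leq C_6(\Delta_k-\Delta_{k+1})^{2\alpha/3} + C_7 N_k^{2\alpha/3}$. Combining with the previous display produces the master recursion
\[
\Delta_{k+1} \leq C_8(\Delta_k - \Delta_{k+1})^{2\alpha/3} + R,
\]
with $R = O(\epsilon_{\text{eig}}^\alpha + \epsilon_{\text{ls}}^{2\alpha} + \epsilon_{\text{noise}}^\alpha)$ after bounding $N_k^{2\alpha/3}$ and $E_k^\alpha$ by the assumed noise budget.

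Finally, I would pass to the shifted sequence $\tilde\Delta_k := \max\{\Delta_k - CR,\,0\}$ for a suitably large constant $C$, verify that on the regime $\tilde\Delta_k > 0$ it satisfies the clean recursion $\tilde\Delta_{k+1} \leq C_8(\tilde\Delta_k - \tilde\Delta_{k+1})^{2\alpha/3}$, and apply \cref{lemma:seq} termwise to read off the three rates $K^{-2\alpha/(3-2\alpha)}$, $\exp(-K)$, and $\exp(\exp(-K))$ in the regimes $\alpha\in[1,3/2)$, $\alpha=3/2$, and $\alpha\in(3/2,2]$, respectively; adding the noise floor $CR$ back produces the stated bounds. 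The main obstacle is precisely this last step: a direct plug-in into \cref{lemma:seq} breaks because of the constant additive offset $R$, so either the shift-and-truncate argument above, or an equivalent two-phase argument (contract while $\Delta_k$ dominates $R$, then terminate), is essential. A secondary, milder technicality is the burn-in required by \cref{lemma:seq} in the super-linear regime $\alpha > 3/2$, which only kicks in once $\Delta_k$ drops below a fixed threshold; coercivity from \cref{assum:common} guarantees $\Delta_0$ is finite so this costs only a constant number of iterations and does not affect the stated asymptotic rate.
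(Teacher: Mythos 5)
Your proposal follows essentially the same route as the paper's proof: apply gradient dominance to the $\|g_{k+1}\|$ bound of \cref{lemma:descent}, substitute the $\|d_k\|^3$ bound raised to the $2\alpha/3$ power via subadditivity, fold the sampling errors into an $\epsilon_{\text{noise}}$ floor in expectation, and absorb the resulting additive offset by passing to a shifted sequence before invoking \cref{lemma:seq}. The paper performs exactly this shift (defining $\Delta_k = F(x_k)-F^* - C(\epsilon_{\text{eig}}^{\alpha}+\epsilon_{\text{ls}}^{2\alpha}+2\epsilon_{\text{noise}}^{\alpha})$), so your ``shift-and-truncate'' step is the intended argument rather than a deviation.
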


\begin{proof}
By the gradient dominance assumption and \cref{eq:g_descent},
\begin{align*}
F\left( x^{k+1}\right) -F^{*} & \leqslant C_{\text{gd}}\Vert g_{k+1}\Vert ^{\alpha }\\
 & \leqslant C_{\text{gd}}\left(\frac{7L_{H} +19}{6}\Vert d_{k}\Vert ^{2} +\epsilon _{\text{ls}}^{2} +\epsilon _{\text{eig}} +\Vert \hat{g}_{k} -\hat{g} '_{k}\Vert +\frac{1}{2}\Vert \hat{H}_{k} -H_{k}\Vert ^{2}\right)^{\alpha }
\end{align*}
Note that for any $\displaystyle x,y >0$, we have 
\begin{align*}
\ ( x+y)^{r} \leqslant  & \begin{cases}
x^{r} +y^{r} & ,r\in ( 0,1) ,\\
2^{r-1}\left( x^{r} +y^{r}\right) & ,r\geqslant 1.
\end{cases}
\end{align*}
Thus, $\displaystyle \ ( x+y)^{r} =O\left( x^{r} +y^{r}\right)$ and
\begin{align*}
F\left( x^{k+1}\right) -F^{*} &  \leqslant O\left(\Vert d_{k}\Vert ^{2\alpha } +\epsilon _{\text{ls}}^{2\alpha } +\epsilon _{\text{eig}}^{\alpha } +\Vert \hat{g}_{k} -\hat{g} '_{k}\Vert ^{\alpha } +\frac{1}{2}\Vert \hat{H}_{k} -H_{k}\Vert ^{2\alpha }\right)\\
 & \leqslant O\left( F\left( x^{k}\right) -F\left( x^{k+1}\right) +\epsilon {_{\text{eig}}}^{3/2} +\epsilon _{\text{ls}}^{3} +\Vert H_{k} -\hat{H}_{k}\Vert ^{3} +\Vert g_{k} -\hat{g}_{k}\Vert ^{3/2}\right)^{2\alpha /3}\\
 & \ \ \ \ \ \ \ \ \ \ \ \ \ \ \ \ \ \ \ +O\left( \epsilon _{\text{ls}}^{2\alpha } +\epsilon _{\text{eig}}^{\alpha } +\Vert \hat{g}_{k} -\hat{g} '_{k}\Vert ^{\alpha } +\frac{1}{2}\Vert \hat{H}_{k} -H_{k}\Vert ^{2\alpha }\right)\\
 & =O\left(\left( F\left( x^{k}\right) -F\left( x^{k+1}\right)\right)^{2\alpha /3} +\epsilon {_{\text{eig}}}^{\alpha } +\epsilon _{\text{ls}}^{2\alpha } +\Vert H_{k} -\hat{H}_{k}\Vert ^{2\alpha } +\Vert g_{k} -\hat{g}_{k}\Vert ^{\alpha }\right) ,
\end{align*}where we have used \cref{eq:f_descent} in the inequality. Take expectation, we get 
\begin{align}\label{eq:iter}
\mathbb{E}\left[ F\left( x^{k+1}\right) -F^{*}\right] & \leqslant O\left(\left(\mathbb{E}\left[ F\left( x^{k}\right) -F\left( x^{k+1}\right)\right]\right)^{2\alpha /3} +\epsilon _{\text{eig}}^{\alpha } +\epsilon _{\text{ls}}^{2\alpha } +\left(\mathbb{E}\left[\Vert g_{k} -\hat{g}_{k}\Vert ^{2}\right]\right)^{\alpha /2} +\mathbb{E}\left[\Vert \hat{H}_{k} -H_{k}\Vert ^{2\alpha }\right]\right)
\end{align}
Suppose we take $ N_{H} =O\left( \epsilon _{\text{noise}}^{-1}\right) ,N_{g} =O\left( \epsilon _{\text{noise}}^{-2}\right)$, by Lemma 3 in \cite{masiha2022stochastic} and \cref{asp:var}, 
\begin{equation*}
\mathbb{E}\left[\Vert g_{k} -\hat{g}_{k}\Vert ^{2}\right] \leq  \epsilon _{\text{noise}}^{2} ,\ \mathbb{E}\left[\Vert \hat{H}_{k} -H_{k}\Vert ^{2\alpha}\right] \leq  \epsilon _{\text{noise}}^\alpha,
\end{equation*}
Let $ \Delta _{k} = \mathbb{E}[F\left( x_k\right)] -F^* -\left( \epsilon _{\text{eig}}^{\alpha } +\epsilon _{\text{ls}}^{2\alpha } +2\epsilon _{\text{noise}}^{\alpha }\right) ,$ there exist $ C_\Delta $ such that
\begin{align*}
\Delta _{k+1} & \leq  C_\Delta( \Delta _{k} -\Delta _{k+1})^{2\alpha /3} .
\end{align*}
Applying \cref{lemma:seq}, we obtain the result. 
\end{proof}

\begin{proof}[Proof of \cref{thm:hsodm}]
Finally, we prove the sample complexity result in \cref{thm:hsodm}. For a given tolerance $ \epsilon $, let $ \epsilon _{\text{eig}} =O\left( \epsilon ^{1/\alpha }\right) ,\epsilon _{\text{ls}} =O\left( \epsilon ^{1/2\alpha }\right) ,\mathcal{\epsilon _{\text{noise}} =O}\left( \epsilon ^{1/\alpha }\right)$ in \cref{thm:rate}. We need $ O\left( \epsilon ^{2/\alpha }\right)$ samples in each iteration. Thus, by \cref{lemma:seq}, we have the following sample complexity results.\par  
For time complexity, note that in each iteration, by \cref{lemma:ls_err}, we need $\mathcal{O}(\log( (1 + \| g\| )/\epsilon _{\text{ls}} \epsilon _{\text{eig}}))$ step in \cref{algo:ls}. From \cref{eq:g_descent} and \cref{thm:rate}, we know 
$\mathbb{E} \| g_{k} \| \leqslant \mathcal{O}(\epsilon ^{1/\alpha } +K^{-\frac{4\alpha }{3-2\alpha }} )$. Therefore, $ \mathbb{E}[\mathcal{O}(\log( (1 + \| g\| )/\epsilon _{\text{ls}} \epsilon _{\text{eig}}))] \leq \mathcal{O}(\log(1/\epsilon))$ . 
\begin{table}[H]
\centering 
\begin{tabular}{|p{0.2\textwidth}|p{0.4\textwidth}|p{0.25\textwidth}|}
\hline 
 $ \alpha $ & Expected Time Complexity & Sample Complexity \\
\hline 
 $ \alpha \in [1,3/2)$ & $ O\left( \log( 1/\epsilon ) \epsilon ^{-7/( 2\alpha ) +3/4}\right)$ & $ $$ O\left( \epsilon ^{-7/( 2\alpha ) +1}\right)$ \\
\hline 
 $ \alpha =3/2$ & $ O\left( \log^{2}( 1/\epsilon ) \epsilon ^{-2/\alpha -1/4}\right)$ & $ O\left(\log( 1/\epsilon ) \epsilon ^{-2/\alpha }\right)$ \\
\hline 
 $ \alpha \in ( 3/2,2]$ & $ O\left( \log( 1/\epsilon )\log\log( 1/\epsilon ) \epsilon ^{-2/\alpha -1/4}\right)$ & $ O\left(\log\log( 1/\epsilon ) \epsilon ^{-2/\alpha }\right)$ \\
 \hline
\end{tabular}
\end{table}
\end{proof}
\subsection{Proof of Corollary \ref{cor:rl}}
If $ F $ only satisfies the weak gradient dominance property (\ref{asp:gd_wdom}), notice that \cref{lemma:descent} still holds. Following the proof of \cref{thm:rate}, we can get a variant of \cref{eq:iter},
    \begin{align*}
\mathbb{E}\left[ F\left( x^{k+1}\right) -F^{*}\right] & \leqslant O\left(\left(\mathbb{E}\left[ F\left( x^{k}\right) -F\left( x^{k+1}\right)\right]\right)^{2\alpha /3} +\epsilon _{\text{eig}}^{\alpha } +\epsilon _{\text{ls}}^{2\alpha } +\left(\mathbb{E}\left[\Vert g_{k} -\hat{g}_{k}\Vert ^{2}\right]\right)^{\alpha /2} +\mathbb{E}\left[\Vert \hat{H}_{k} -H_{k}\Vert ^{2\alpha }\right] + \epsilon _{\text{noise}}\right)
\end{align*}
        Let $ \Delta'_{k} =F\left( x_k\right) -F^* -\left( \epsilon _{\text{eig}}^{\alpha } +\epsilon _{\text{ls}}^{2\alpha } +2\epsilon _{\text{noise}}^{\alpha } + \epsilon_{\text{weak}}\right)  ,$ there exist $ C'_{\Delta} $ such that
        \begin{align*}
        \Delta' _{k+1} & \leq  C'_\Delta( \Delta' _{k} -\Delta' _{k+1})^{2\alpha /3} .
        \end{align*}
        Applying \cref{lemma:seq}, we get the result.

\section{Proof of Sample Complexity Results of Algorithm \ref{algo:hsodm_vr}}\label{sec:pf_vr}

In this section, we strengthen the sample complexity result using variance reduction techniques. (\ref{eq:iter}) implies that the the estimation error $\mathbb{E}[\|g_k - \hat{g}_k\|^2]$ and $\mathbb{E}[\|H_k-\hat{H}_k\|^{2\alpha}]$ directly influence the convergence rate. We need to use the sample more cleverly to increase sample complexity. By \cref{assum:smooth}, we have $ \Vert H_{k} -\hat{H}_{k}\Vert \leqslant {C_H} = 2L_g$, we have, 
\begin{equation*}
\ \mathbb{E}\left[\Vert H_{k} -\hat{H}_{k}\Vert ^{2\alpha }\right] \leqslant C_H^{2\alpha }\mathbb{E}\left[\Vert H_{k} -\hat{H}_{k}\Vert ^{2}/{C_H}^2\right] \leqslant C_H^{\alpha}\mathbb{E}\left[\Vert H_{k} -\hat{H}_{k}\Vert ^{2}\right]^{\alpha /2}.
\end{equation*}
Thus, the noise error caused by Hessian estimation is of the same order as gradient estimation and we only need to analyze the impact of applying variance reduction techniques to the gradient. A similar analysis applies to Hessian too. \par 
We mainly use the variance reduction technique from \cite{fang2018spider}. Recall that we use gradient estimation in \cref{algo:hsodm_vr}, 
\begin{align*}
v_{k} & =\begin{cases}
\nabla_{S_{k}} F( x_{k}) , & k\ \bmod K_{C} =0,\\
\nabla_{S_{k}} F( x_{k}) -\nabla_{S_{k}} F( x_{k-1}) +v_{k-1} , & k\ \bmod K_{C} \neq 0,
\end{cases}
\end{align*}
where $ \nabla_{S_{k}} F( x_{k}) =\frac{1}{|S_{k} |}\sum _{k=1}^{|S_{k} |} \nabla F( x_{k} ,\xi _{k}) .$ Let 
\begin{align*}
e_{k} & =\begin{cases}
\nabla_{S_{k}} F( x_{k}) -F( x_{k}) , & k\ \bmod K_{C} =0,\\
\nabla_{S_{k}} F( x_{k}) -\nabla_{S_{k}} F( x_{k-1}) -( F( x_{k}) -F( x_{k-1})) , & k\ \bmod K_{C} \neq 0,
\end{cases}
\end{align*}
we have error decomposition  $ F( x_{k}) -v_{k} =\sum _{i=\lfloor k/K_{C} \rfloor K_{C}}^{k} e_{i} .$ We define the sigma field $ \mathcal{F}_{k} =\sigma ( x_{0} ,\cdots ,x_{k} ,e_{1} ,\cdots ,e_{k-1}) .$ The following Lemma analyzes the error $e_k$.

\begin{lemma}\label{lemma:vr_err}
    \begin{equation*}
    \mathbb{E}\left[\Vert e_{k}\Vert ^{2} |\mathcal{F}_{k}\right] \leqslant \frac{4L_g}{n_{k,g}}\Vert d_{k}\Vert ^{2},\quad\quad k \bmod K_{C} \neq 0
    \end{equation*}
    where $ d_k = x_k - x_{k-1} $.
    \end{lemma}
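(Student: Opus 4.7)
The plan is to recognize this as the standard SPIDER variance bound and proceed in three steps. Since $k \bmod K_C \neq 0$, the error $e_k$ is a sum of conditionally i.i.d.\ mean-zero random variables: writing $\nabla_{S_k} F(x_k) - \nabla_{S_k} F(x_{k-1}) = \frac{1}{n_{k,g}} \sum_{i=1}^{n_{k,g}} \bigl[ \nabla F(x_k, \xi_i) - \nabla F(x_{k-1}, \xi_i) \bigr]$, and interpreting the last term in the definition of $e_k$ as the true gradient difference $\nabla F(x_k) - \nabla F(x_{k-1})$ (which is the intended quantity, since unbiasedness of $\nabla_{S_k} F$ is what makes $\mathbb{E}[e_k \mid \mathcal{F}_k] = 0$), we can set
\[
Y_i = \nabla F(x_k, \xi_i) - \nabla F(x_{k-1}, \xi_i) - \bigl(\nabla F(x_k) - \nabla F(x_{k-1})\bigr),
\]
so that $e_k = \frac{1}{n_{k,g}} \sum_i Y_i$ with $\mathbb{E}[Y_i \mid \mathcal{F}_k] = 0$ and the $Y_i$'s independent given $\mathcal{F}_k$.

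Second, I would exploit orthogonality of the $Y_i$'s under the conditional expectation to collapse the variance of the sum:
\[
\mathbb{E}\bigl[\|e_k\|^2 \mid \mathcal{F}_k\bigr] = \frac{1}{n_{k,g}^2} \sum_{i=1}^{n_{k,g}} \mathbb{E}\bigl[\|Y_i\|^2 \mid \mathcal{F}_k\bigr] = \frac{1}{n_{k,g}} \mathbb{E}\bigl[\|Y_1\|^2 \mid \mathcal{F}_k\bigr],
\]
since the $Y_i$'s are identically distributed in $\xi_i$ given $\mathcal{F}_k$. This reduces everything to a single-sample second-moment bound.

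Third, I would bound $\mathbb{E}[\|Y_1\|^2 \mid \mathcal{F}_k]$ by pairing Young's inequality with the average-Lipschitz assumption. Writing $Y_1 = X - \mathbb{E}[X \mid \mathcal{F}_k]$ where $X = \nabla F(x_k, \xi) - \nabla F(x_{k-1}, \xi)$, I use $\|Y_1\|^2 \leq 2\|X\|^2 + 2\|\mathbb{E}[X \mid \mathcal{F}_k]\|^2$ and Jensen to get $\mathbb{E}[\|Y_1\|^2 \mid \mathcal{F}_k] \leq 4 \mathbb{E}[\|X\|^2 \mid \mathcal{F}_k]$. The average-Lipschitz bound $\mathbb{E}[\|\nabla F(x_k, \xi) - \nabla F(x_{k-1}, \xi)\|^2] \leq L'_1 \|x_k - x_{k-1}\|^2 = L'_1 \|d_k\|^2$ then gives $\mathbb{E}[\|Y_1\|^2 \mid \mathcal{F}_k] \leq 4 L'_1 \|d_k\|^2$, and substituting into the previous display yields exactly the claim.

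There is essentially no serious obstacle here; this is a routine SPIDER-style calculation. The only point requiring care is a notational one: $d_k$ in the lemma denotes the step $x_k - x_{k-1}$ (not the solution of the HQM subroutine at iteration $k$), so one must keep track of indices when invoking the average-Lipschitz bound. Measurability of $x_k$ with respect to $\mathcal{F}_k$ is what allows the conditional expectation to treat $x_k - x_{k-1}$ as a constant when applying the Lipschitz assumption.
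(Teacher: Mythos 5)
Your proposal is correct and follows essentially the same route as the paper: write $e_k$ as an average of conditionally i.i.d.\ mean-zero increments, collapse the variance of the sum to get the $1/n_{k,g}$ factor, split via $\|a-b\|^2 \le 2\|a\|^2 + 2\|b\|^2$ plus Jensen, and invoke the average-Lipschitz bound to obtain $4L_1'\|d_k\|^2$. You also correctly read the paper's $F(x_k)-F(x_{k-1})$ terms as typos for the gradient differences, which is the intended meaning.
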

    \begin{proof}

     We have 
    \begin{align*}
    \mathbb{E}\left[\Vert e_{k}\Vert ^{2} |\mathcal{F}_{k}\right] & =\mathbb{E}\left[\left\Vert \frac{1}{n_{k,g}}\sum _{i=1}^{n_{k,g}} \nabla F( x_{k} ,\xi _{i}) -\nabla F( x_{k-1} ,\xi _{i}) -( F( x_{k}) -F( x_{k-1}))\right\Vert ^{2}\right]\\
     & =\frac{1}{n_{k,g}}\mathbb{E}\left[\Vert \nabla F( x_{k} ,\xi _{1}) -\nabla F( x_{k-1} ,\xi _{1}) -( F( x_{k}) -F( x_{k-1}))\Vert ^{2}\right]\\
     & \leqslant \frac{2}{n_{k,g}}\mathbb{E}\left[\Vert \nabla F( x_{k} ,\xi _{1}) -\nabla F( x_{k-1} ,\xi _{1})\Vert ^{2}\right] +\frac{2}{n_{k,g}}\mathbb{E}\left[\Vert F( x_{k}) -F( x_{k-1})\Vert ^{2}\right]\\
     & \leqslant \frac{4L_g}{n_{k,g}}\Vert x_{k} -x_{k-1}\Vert ^{2} =\frac{4L_g}{n_{k,g}}\Vert d_{k}\Vert ^{2} .
    \end{align*}
            
    \end{proof}
    \begin{proof}[Proof of \cref{thm:vr}]
    Using \cref{lemma:vr_err}, we can estimate the variance of the gradient estimation,
    \begin{align*}
    \mathbb{E}\left[\Vert \nabla F( x_{k}) -v_{k}\Vert ^{2}\right] & =\mathbb{E}\left[\left\Vert \sum _{i=\lfloor k/K_{C} \rfloor K_{C}}^{k} e_{i}\right\Vert ^{2}\right] =\sum _{i=\lfloor k/K_{C} \rfloor K_{C}}^{k}\mathbb{E}\left[\Vert e_{i}\Vert ^{2}\right]\\
     & \leqslant \frac{\sigma ^{2}}{n_{\lfloor k/K_{C} \rfloor K_{C} ,g}} +4L_g\sum _{i=\lfloor k/K_{C} \rfloor K_{C} +1}^{k}\mathbb{E}\left[\frac{\Vert d_{i}\Vert ^{2}}{n_{k,g}}\right]
    \end{align*}
    For $ \alpha \in [ 1,3/2)$, if we take 
    \begin{equation*}
    n_{k,g} =\begin{cases}
    O\left( k^{4/( 3-2\alpha )}\right) & k\ \bmod K_C=0,\\
    O\left(\Vert d_{k}\Vert ^{2} K_{C}( \lfloor k/K_{C} \rfloor K_{C})^{4/( 3-2\alpha )}\right) & k\ \bmod K_C\neq 0,
    \end{cases}
    \end{equation*}
    we have $ \mathbb{E}\left[\Vert F( x_{k}) -v_{k}\Vert ^{2}\right]^{\alpha /2} \leqslant O\left( 1/k^{1/( 3/( 2\alpha ) -1)}\right) .$ Follow the proof of \cref{thm:rate}, we can get  $ F\left( x^{kK_{C}}\right) -F^* =O\left(( kK_{C})^{-2\alpha /( 3-2\alpha )}\right) $.
    
    Next, we estimate the expected sample complexity. By \cref{lemma:descent} we have,
    \begin{align*}
    \sum _{i=( k-1) K_{C}}^{kK_{C} -1}\Vert d_{i}\Vert ^{2} & \leqslant \sum _{i=( k-1) K_{C}}^{kK_{C} -1}\left(\frac{6}{{L_H}} F\left( x^{i}\right) -F\left( x^{i+1}\right) +\epsilon {_{\text{eig}}}^{3/2} +\epsilon _{\text{ls}}^{3} +\Vert H_{i} -\hat{H}_{i}\Vert ^{3} +\Vert g_{i} -\hat{g}_{i}\Vert ^{3/2}\right)^{2/3}\\
     & \leqslant O\left(\mathbb{E}\left[\sum _{i=( k-1) K_{C}}^{kK_{C} -1}\left( F\left( x^{i}\right) -F\left( x^{i+1}\right)\right)^{2/3} +\epsilon _{\text{eig}} +\epsilon _{\text{ls}}^{2} +\Vert H_{i} -\hat{H}_{i}\Vert ^{2} +\Vert g_{i} -\hat{g}_{i}\Vert \right]\right)\\
     & \leqslant O\left( K_{C}^{1/3}\left( F\left( x^{( k-1) K_{C}}\right) -F\left( x^{kK_{C}}\right)\right)^{2/3} +K_{C}( kK_{C})^{-2/( 3-2\alpha )}\right)\\
     & =O\left( K_{C}^{1/3} \cdotp ( kK_{C})^{-4\alpha /( 9-6\alpha )} +K_{C}( kK_{C})^{-2/( 3-2\alpha )}\right) ,
    \end{align*}
    where we use $ F\left( x^{kK_{C}}\right) -F^* =O\left(( kK_{C})^{-2\alpha /( 3-2\alpha )}\right) $ by \cref{thm:rate}. We get
    \begin{equation*}
    \ E\left[\sum _{i=( k-1) K_{C}}^{kK_{C} -1}\Vert d_{i}\Vert ^{2}\right] =O\left( K{_{C}}^{1/3}( K_{C} k)^{\frac{-4\alpha }{3( 3-2\alpha )}}\right) .
    \end{equation*}
     Thus,
    \begin{align*}
    \mathbb{E}\left[\sum _{k=1}^{K} n_{k,g}\right] & =O\left(\sum _{k=1}^{\lceil K/K_{C} \rceil }( K_{C} k)^{\frac{4}{3-2\alpha }} +K{_{C}}^{4/3}\sum _{k=1}^{\lceil K/K_{C} \rceil }\frac{( kK_{C})^{4/( 3-2\alpha )}}{( K_{C} k)^{4\alpha /( 3( 3-2\alpha ))}}\right)\\
     & =O\left( K^{1+4/( 3-2\alpha )} /K_{C} +K{_{C}}^{1/3} K^{1+( 12-4\alpha ) /( 3( 3-2\alpha ))}\right) .
    \end{align*}
    Take $ K_{C} =\mathcal{O}(K)$, use $ \mathcal{O}(K) =O\left( \epsilon ^{1-3/( 2\alpha )}\right) ,$ we get the result. We need $ O\left( \epsilon ^{1-3/( 2\alpha )}\right)$ iteration to reach $ \epsilon $-approximate point. The expected sample complexity is $ O\left(K^{4/(3-2\alpha)}\right) =O\left( \epsilon ^{-2 /\alpha}\right)$.

    \end{proof}
\section{A Brief Introduction of Reinforcement Learning}\label{sec:rl}
A MDP $ \mathcal{M}$ is specified by tuple $(\mathcal{S}, \mathcal{A}, \mathbb{P}, r, \gamma, \rho)$, where $\mathcal{S}$ is the state space; $\mathcal{A}$ is the action space; $P : \mathcal{S} \times \mathcal{A} \mapsto \Delta(\mathcal{S})$ is the transition function with $\Delta(\mathcal{S})$ the space of probability distribution over $\mathcal{S}$, and $P(s' \mid s, a)$ denotes the probability of
transitioning into state $s'$ upon taking action $a$ in state $s$; $r: \mathcal{S} \times \mathcal{A} \mapsto [0,1]$ is the reward function, and $r(s, a)$ is the immediate reward associated with taking action $a$ in state $s$; $\gamma \in (0, 1)$ is the discount factor; $\rho \in \Delta(\mathcal{S})$ is the initial state distribution. At $ k$-th step, the agent is at state $ s_{k}$ and pick one action from the action space $ a_{k} \in \mathcal{A}$. Then, the environment gives reward $ r_{k}$ to the agent and transit to the next state $ s_{k+1}$ with probability $ P ( s_{k+1} |s_{k} ,a_{k})$.

The parametric policy $\pi_\theta$ is a probability distribution over $\mathcal{S} \times \mathcal{A}$ with parameter $\theta \in \mathbb{R}^d$, and $\pi_\theta(a \mid s)$ denotes the probability of taking action $a$ at a given state $s$. For example, one may consider the sofemax policy, given by 
\begin{equation*}
    \pi_{\theta}(a \mid s) = \frac{ \exp \left( \theta_{s, a}\right)}{\sum_{a' \in \mathcal{A}} \exp \left( \theta_{s, a'}\right)}
\end{equation*}
where the parameter space is $\theta \in \mathbb{R}^{|\mathcal{S}||\mathcal{A}|}$. Let $\tau=\left\{s_t, a_t\right\}_{t \geq 0} $ be the trajectory generated by the policy $\pi_\theta$, and $p(\tau \mid \pi_{\theta})$ be the probability of the trajectory $\tau$ being sampled from $\pi_{\theta}$. Then, we have
\begin{equation*}
    p(\tau \mid \pi_{\theta})=\rho\left(s_0\right) \prod_{t=0}^{\infty} \pi_{\theta}\left(a_t \mid s_t\right)P\left(s_{t+1} \mid s_t, a_t\right), 
\end{equation*}
and the expected return of $\pi_{\theta}$ is 
\begin{equation*}
    J\left(\pi_\theta\right):=\mathbb{E}_{\tau \sim p\left(\cdot \mid \pi_\theta\right)}\left[\sum_{t=0}^{\infty} \gamma^t r\left(s_t, a_t\right)\right].
\end{equation*}
Assume that $\pi_{\theta}$ is differentiable with respect to $\theta$, and denote $J(\theta) = J(\pi_{\theta})$ for simplicity. The goal of reinforcement learning is to find 
\begin{equation*}
    \theta^* = \arg\max_{\theta} J(\theta).
\end{equation*}
However, $J(\theta)$ is differentiable but non-concave in general, leading to the difficulty to find the global optimal solution. With two common assumptions, including non-degenerate Fisher matrix \citep{agarwal2021theory, yuan2022general} and transferred compatible function approximation error \citep{agarwal2021theory}, one can prove that 
\begin{equation*}
    J^* - J(\theta) \leq \tau \|\nabla J(\theta)\| + \epsilon'
\end{equation*}
where $J^* \coloneqq \max J(\theta)$. Hence, $J(\theta)$ satisfies the weak gradient dominance property with $\alpha = 1$, as well as \cref{assum:smooth} and \cref{asp:var}. We list the two assumptions here for self-completeness, and the proof can be found in \citep{masiha2022stochastic, yuan2022general}.
\begin{assumption}[Fisher-non-degenerate.]
    For all $\theta \in \mathbb{R}^d$, there exists $\mu_F>0$ such that the Fisher information matrix $F_\rho(\theta)$ induced by policy $\pi_\theta$ and initial distribution $\rho$ satisfies
\begin{equation*}
F_\rho(\theta):=\mathbb{E}_{(s, a) \sim v_\rho^{\pi_\theta}}\left[\nabla_\theta \log \pi_\theta(a \mid s) \nabla_\theta \log \pi_\theta(a \mid s)^T\right] \succeq \mu_F I_{d \times d},
\end{equation*}
where $v_\rho^{\pi_\theta}(s, a):=(1-\gamma) \mathbb{E}_{s_0 \sim \rho} \sum_{t=0}^{\infty} \gamma^t \mathbb{P}\left(s_t=s, a_t=a \mid s_0, \pi_\theta\right)$ is the state-action visitation measure.
\end{assumption}
\begin{assumption}[Transferred compatible function approximation error]
For all $\theta \in \mathbb{R}^d$, there exists $\epsilon_{\text {bias }}>0$ such that the transferred compatible function approximation error with $(s, a) \sim v_\rho^{\pi_{\theta^*}}$ satisfies
\begin{equation*}
\mathbb{E}\left[\left(A^{\pi_\theta}(s, a)-(1-\gamma) u^{* T} \nabla_\theta \log \pi_\theta(a \mid s)\right)^2\right] \leq \epsilon_{\text {bias }},
\end{equation*}
where $v_\rho^{\pi_{\theta^*}}$ is the state-action distribution induced by an optimal policy, and $u^*=\left(F_\rho(\theta)\right)^{\dagger} \nabla J(\theta)$.
\end{assumption}
We remark that \cite{masiha2022stochastic} also uses this reinforcement learning setting to study the performance of SCRN for the gradient-dominated function with $\alpha = 1$, which shows that our numerical experiment's setting is standard. Practically, we cannot compute $J(\theta)$ due to the infinite horizon length. Hence, we resort to truncated trajectories with the horizon length $H$, and focus on 
\begin{equation*}
    \max_{\theta} \,\, J_{H}(\theta)  \coloneqq \mathbb{E}_{\tau \sim p\left(\cdot \mid \pi_\theta\right)}\left[\sum_{t=0}^{H-1} \gamma^t r\left(s_t, a_t\right)\right].
\end{equation*}
We seek for the stationary point $\hat{\theta}$, that is, 
\begin{equation*}
    \| \nabla J_{H}(\hat{\theta})\| \leq \epsilon.
\end{equation*}
Note that
\begin{equation*}
    \nabla J_H(\theta)=\mathbb{E}_{\tau \sim p\left(\cdot \mid \pi_\theta\right)} \left[\sum_{h=0}^{H-1} \Psi_h(\tau) \nabla \log \pi_\theta\left(a_h \mid s_h\right)\right]
\end{equation*}
where $\Psi_h(\tau) = \sum_{t=h}^{H-1} \gamma^t r(s_t, a_t) $. In practice, we can not compute the full gradient because of the failure to average over all possible trajectories $\tau$. Thus, we construct an empirical estimate by sampling different trajectories. Suppose that we sample $m$ trajectories $\tau^i=\left\{s_t^i, a_t^i\right\}_{0 \leq t \leq H}, 1 \leq i \leq m$. Then the resulting unbiased gradient estimator is 
\begin{equation*}
    \hat{\nabla} J_{H}(\theta) = \frac{1}{m} \sum_{i=1}^m \sum_{h=0}^{H-1} \Psi_h\left(\tau^i\right) \nabla \log \pi_\theta\left(a_h^i \mid s_h^i\right), 
\end{equation*}
 The vanilla policy gradient (VPG) updates $\theta$ by $\theta \leftarrow \theta + \eta \hat{\nabla} J_{H}(\theta)$, where $\eta > 0 $ is the step size.  

When considering the second-order information, we have 
\begin{equation*}
    \nabla^2 J_{\mathrm{H}}(\theta) = \mathbb{E}_{\tau \sim p\left(\cdot \mid \pi_\theta\right)} \left[\nabla \Phi(\theta ; \tau) \nabla \log p\left(\tau \mid \pi_\theta\right)^T+\nabla^2 \Phi(\theta ; \tau)\right]
\end{equation*}
where $\Phi(\theta ; \tau)=$ $\sum_{h=0}^{\mathrm{H}-1} \sum_{t=h}^{\mathrm{H}-1} \gamma^t r\left(s_t, a_t\right) \log \pi_\theta\left(a_h \mid s_h\right)$. As a result, for trajectories $\tau^i=\left\{s_t^i, a_t^i\right\}_{t \geq 0}, 1 \leq i \leq m$, we have the following unbiased estimator of Hessian matrix $\nabla^2 J_{\mathrm{H}}(\theta)$, 
\begin{equation*}
    \hat{\nabla}^2 J_{H}(\theta)=\frac{1}{m} \sum_{i=1}^m \nabla \Phi\left(\theta ; \tau^i\right) \nabla \log p\left(\tau^i \mid \pi_\theta\right)^T+\nabla^2 \Phi\left(\theta ; \tau^i\right)
\end{equation*}
With $ \hat{\nabla} J_{H} (\theta)$ and $ \hat{\nabla}^2 J_{H}(\theta) $ in hand, we can use our stochastic HSODM to find $\hat{\theta}$.

\begin{figure}
    \centering
    \includegraphics[scale=0.4]{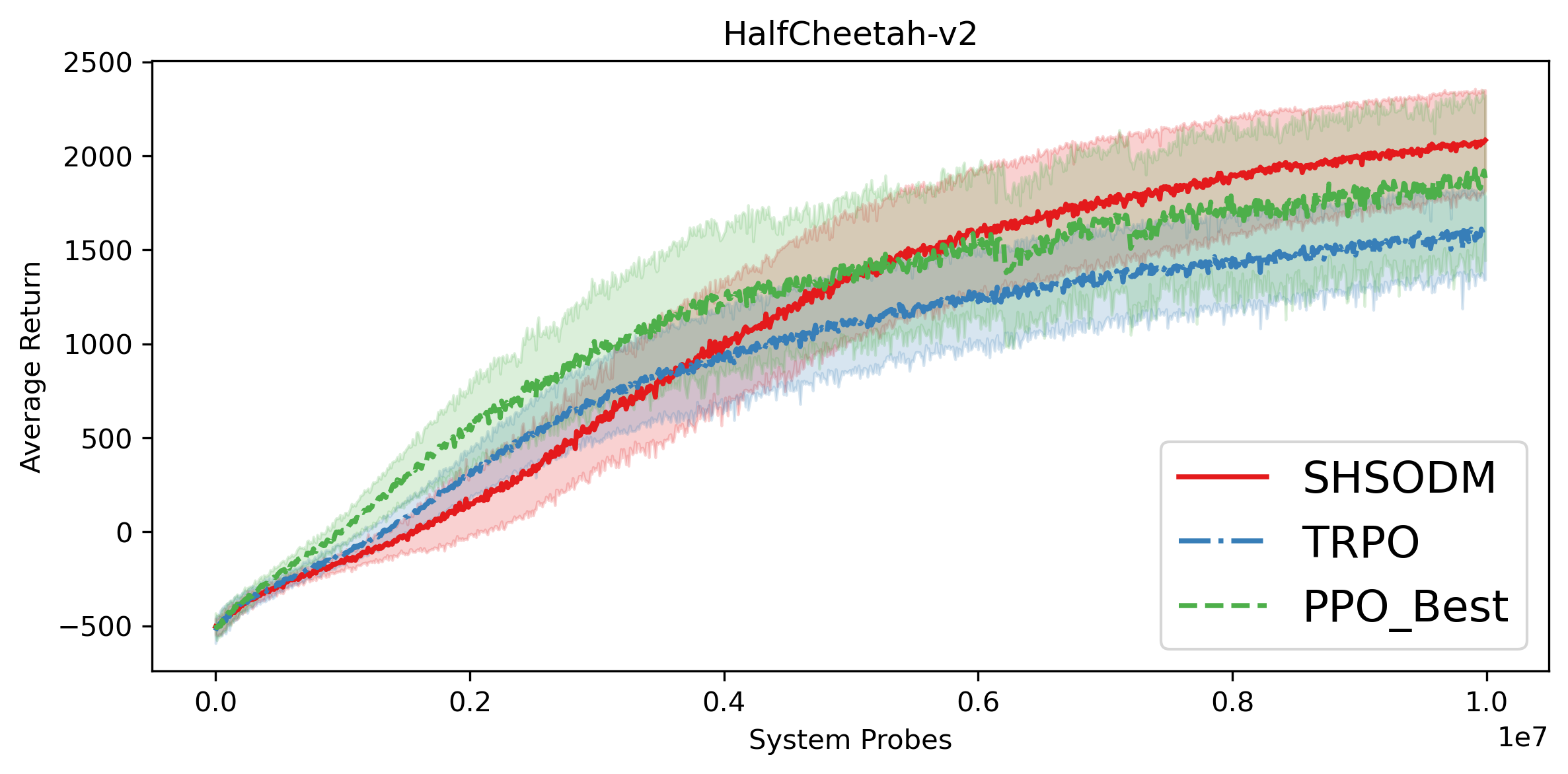} 
    \includegraphics[scale=0.4]{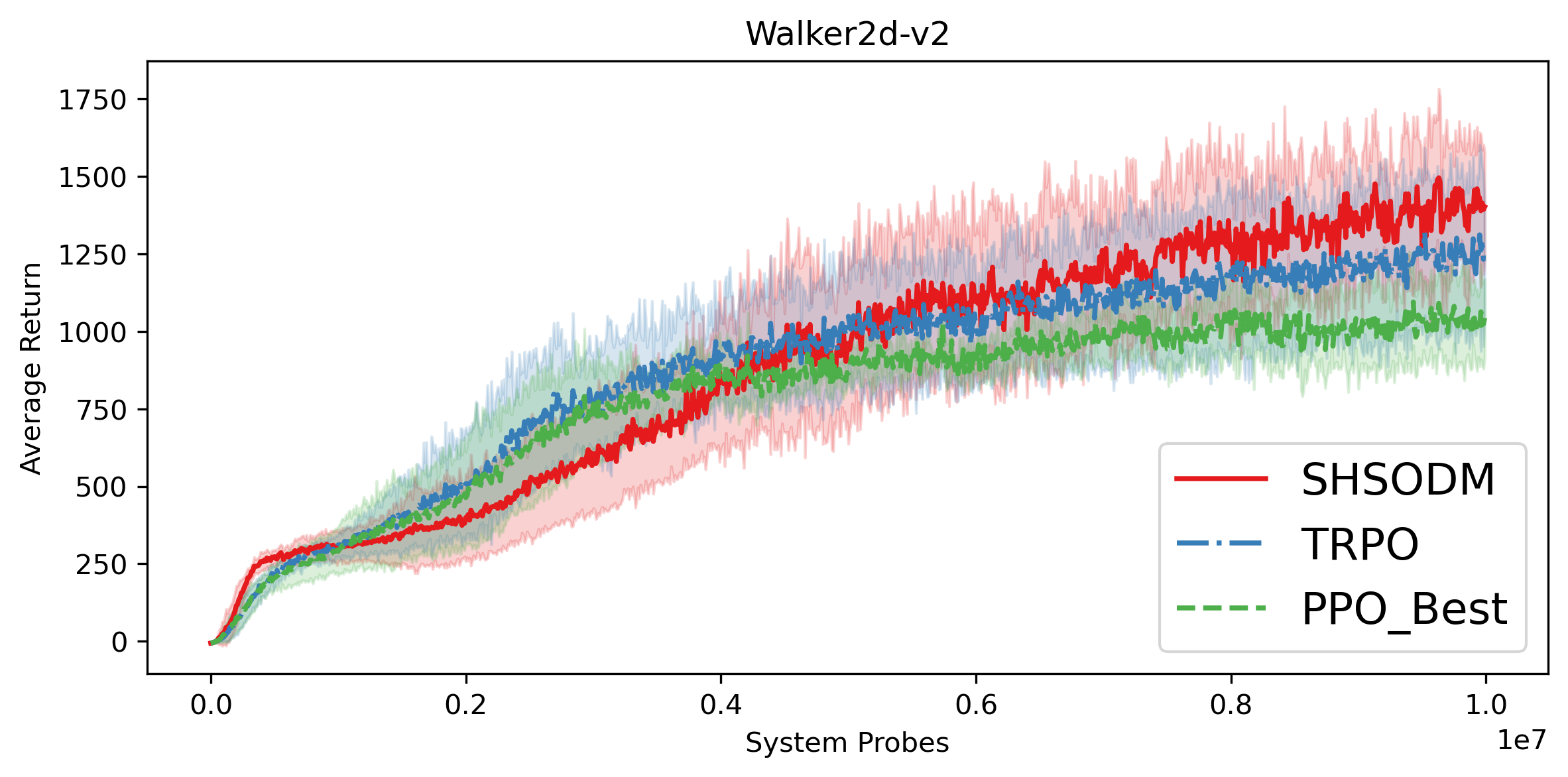}
    \caption{The $x$-axis and $y$-axis represent respectively system probes and the average return. The solid curves depict the mean values of five independent simulations, while the shaded areas correspond to the standard deviation.}
    \label{fig:addi-rl}
\end{figure}

\section{Additional Experiments}

In this section, we further compare the performance of SHSODM with PPO in several RL tasks. For clean demonstration, we only include TRPO as another benchmark. When running PPO, we let its key parameter \texttt{lc\_clip\_range} be $\{ 0.1, 0.2, 0.4, 0.6, 0.8 \}$, and plot the one with the best performance on average return. It is clear that our SHSODM even has better final performance than PPO, although it grows slower than PPO at the initial stage. One of the possible explanations is our SHSODM uses the second-order information of the objective function. While PPO and TRPO both use the second-order information of the constraint (TRPO involves the constraint that the consecutive two policies should not be far away from each other, and PPO penalizes this constraint in the objective function). The information of objective function maybe more useful and can induce better policies.

When comparing the time spent on calculating the update direction, we include TRPO as a benchmark, the variant of PPO. The reason is that PPO uses the first-order optimizer, while TRPO can be seen as a ``second-order'' method. It can be seen that our SHSODM updates faster than TRPO over the 2 tested environments and is consistently better than SCRN.
\begin{figure}[H]
    \centering
    \includegraphics[scale=0.5]{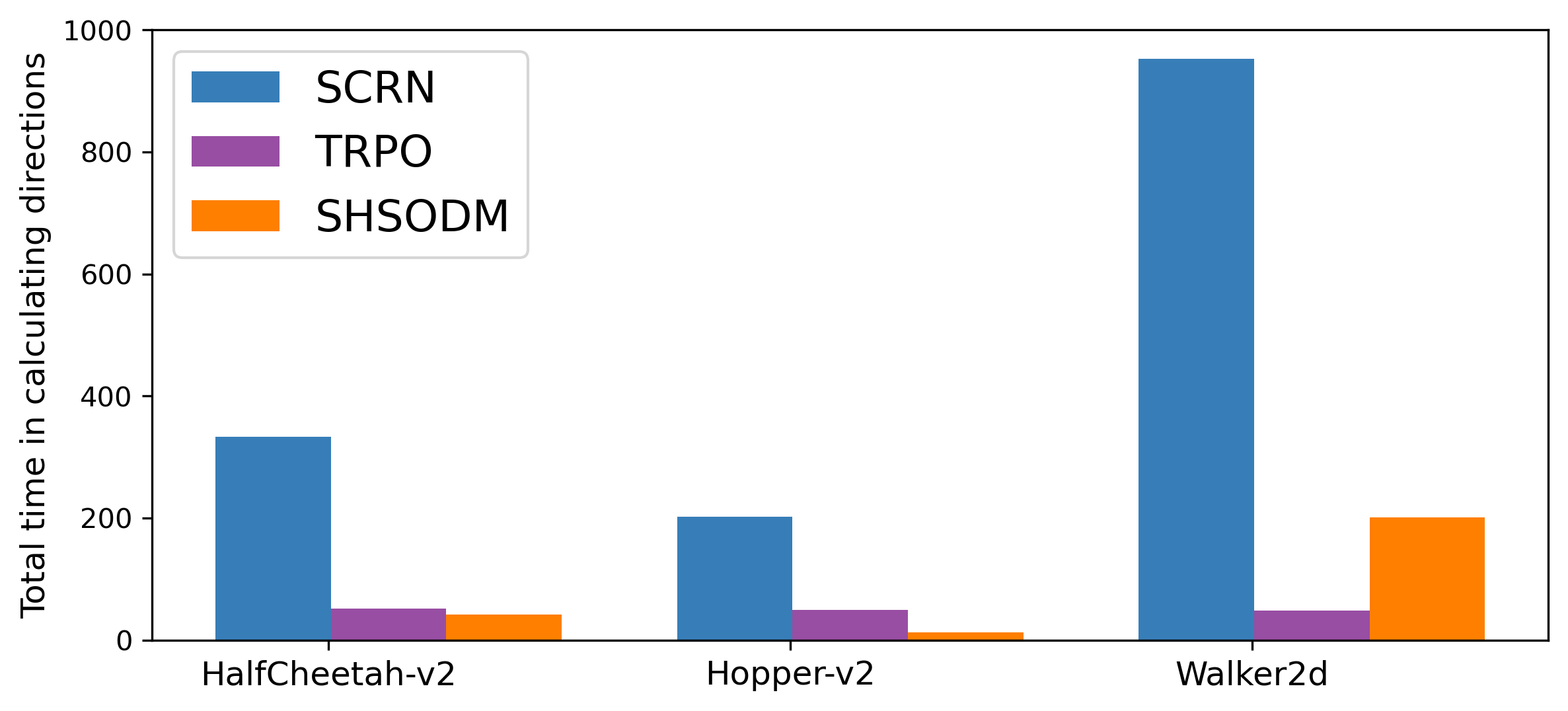}
    \caption{The $x$-axis represents three different tested environments, including \texttt{HalfCheetah-v2}, \texttt{Hopper-v2}, and \texttt{Walker2d-v2}. The $y$-axis presents the total time required to obtain the update direction in $10^3$ epochs.}
    \label{fig:addi-time}
\end{figure}

\end{document}